\author{Florin Ambro} 
\address{Institute of Mathematics ``Simion Stoilow'' of the Romanian
Academy\\
P.O. BOX 1-764, RO-014700 Bucharest\\ 
Romania.}
\email{florin.ambro@imar.ro}
\newcommand{\isoto}{{\overset{\sim}{\rightarrow}}}
\newcommand{\Q}{{\mathbb Q}}
\newcommand{\Z}{{\mathbb Z}}
\newcommand{\R}{{\mathbb R}}
\newcommand{\bP}{{\mathbb P}} 
\newcommand{\bA}{{\mathbb A}} 
\newcommand{\cO}{{\mathcal O}}
\newcommand{\bD}{{\mathbf D}}
\newcommand{\Conv}{\operatorname{Conv}}
\newcommand{\emb}{\operatorname{emb}}
\newcommand{\Fix}{\operatorname{Fix}}
\newcommand{\Int}{\operatorname{int}}
\newcommand{\lct}{\operatorname{lct}}
\newcommand{\length}{\operatorname{length}}
\newcommand{\mld}{\operatorname{mld}}
\newcommand{\gmld}{\operatorname{g-mld}}
\newcommand{\Mov}{\operatorname{Mov}}
\newcommand{\bMov}{\operatorname{{\mathbf Mov}}}
\newcommand{\mult}{\operatorname{mult}}
\newcommand{\relint}{\operatorname{relint}}
\theoremstyle{plain}
\newtheorem{thm}{Theorem}[section]
\newtheorem{lem}[thm]{Lemma}
\newtheorem{prop}[thm]{Proposition}
\theoremstyle{definition}
\newtheorem{exmp}[thm]{Example}
\newtheorem{rem}[thm]{Remark}
\theoremstyle{remark}
\begin{document}

\bibliographystyle{amsalpha+}
\title{Mld versus lct near zero}
\maketitle

\dedicatory{
	\center{Dedicated to James McKernan on the occasion of his 60th birthday}	
	
}

\begin{abstract} 
We study the equivalence of approaching zero for two invariants of a singularity:
the minimal log discrepancy and the log canonical threshold of the 
general hyperplane section.
\end{abstract} 



\footnotetext[1]{2020 Mathematics Subject Classification. 
	Primary: 14M25. Secondary: 14B05.}

\footnotetext[2]{Keywords: toric singularities, minimal log discrepancies, hyperplane sections.}


\section{Intro}


We consider in this note the following question: let $f\colon X\to Y\ni P$ be the germ 
of a Fano contraction, with positive dimensional base, near the fiber over a special point. 
Let $a=\mld_{f^{-1}P}X$ be the minimal log discrepancy of $X$
in the geometric valuations centered inside the central fiber. For a hyperplane section
$P\in H\subset Y$, let $\gamma(H):=\lct(X,f^*H)$ be the log canonical threshold of
its pullback, that is $\gamma \ge 0$ is maximal such that $(X,\gamma f^*H)$ has log canonical singularities.
If $\dim X$ is fixed, is $a$ bounded away from zero if and only if $\gamma:=\max_H\gamma(H)$ is bounded away from zero?

The converse implication follows from the inequality 
$
a\ge \gamma,
$
which holds since the Cartier closure of $f^*H$ has multiplicity at least one in each 
geometric valuation of $X$ centered inside $f^{-1}P$. The direct implication is non-trivial,
but there exists toric evidence.

The question emerged from our study of toric singularities (case $f$ isomorphism), while
trying 20 years ago to understand McKernan-Shokurov's conjecture on the singularities 
of Fano fibrations (solved~\cite{Bir23} in the meantime). Our hope was that 
Borisov's classification of toric $\Q$-factorial singularities into finitely many series~\cite{Bor99} 
would imply McKernan-Shokurov's conjecture, after finding a geometric interpretation for the 
series, which extends to the non-toric case.
Let $P\in X$ be a toric singularity of dimension $d$. If $d=2$, 
Borisov's classification has the following geometric 
interpretation:  the minimal log discrepancy is determined by the log canonical threshold of (one or 
two linearly independent) invariant hyperplane sections (see~\cite{Am24}, which is in fact a preprint 
from 2006). In particular, $\gamma \ge \frac{a}{2}$. If $d=3$, invariant hyperplane sections no longer 
determine the minimal log discrepancy. For example, the $\Q$-factorial toric singularity
$0\in \{z_4^2=z_1z_2z_3\}\subset \bA^4$ satisfies $K\sim 0$ and 
$$
\mld_0X=2>\frac{3}{2}=\max\{\sum_i t_i |\mld(X,\sum_it_iH_i)\ge 0, t_i\ge 0, H_i\text{ invariant hyperplane section} \}
$$
However, it is still true that $\gamma$ is bounded away from zero if so is $a$, as can be deduced  
from Borisov's classification (after replacing $\mld X$ by $\mld_PX$, and assuming $X$ is $\Q$-factorial). 
Borisov's classification extends to the non-$\Q$-factorial case, but only after enlarging the category of 
toric singularities back to the category of germs of toric Fano fibrations. In particular, our question has a positive answer 
in the toric case~\cite[Theorem 3.12]{Am22}. Here we also provide an effective estimate. 
An effective estimate also exists for (non-toric) surface singularities~\cite[Theorem 1.7]{Bin24}.

For  $d \in \Z_{ \ge 1}$ and $a\in (0,+\infty)$ , define recursively $\gamma(d,a)$ by
$\gamma(1,a)=a$ and $\gamma(d,a)=\gamma(d-1,\frac{a^2}{d^2})$ for $d\ge 2$.
One computes
$$
\gamma(d,a)=\frac{a^{2^{d-1}}}{\prod_{i=1}^d i^{2^{i-1}}}.
$$
In particular, $\gamma(d,a)$ is increasing in $a$. Recall that for a toric variety $X$,
the complement $\Sigma_X$ of the open dense torus inside $X$ satisfies 
$K+\Sigma_X\sim 0, \mld(X,\Sigma_X)=0$.

\begin{thm}\label{m1}
	Let $f \colon X \to Y\ni P$ be a toric contraction, where $Y$ is affine of positive
	dimension, containing an invariant point $P$. Suppose $0\le B\le \Sigma_X$ is an 
	invariant $\R$-divisor on $X$ such that  $-(K+B)$ is $f$-nef and 
	$\mld_{f^{-1}P}(X,B)=a>0$. 
	Then there exists an invariant hyperplane section $P\in H\subset Y$ such that 
	$(X,B+\gamma(d,a) f^*H)$ has log canonical singularities, where $d=\dim X$.
\end{thm}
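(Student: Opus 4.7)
The plan is to reformulate the statement in the combinatorial language of the fan of $X$ and proceed by induction on $d=\dim X$, exploiting the recursion $\gamma(d,a)=\gamma(d-1,a^2/d^2)$. Let $\Delta$ be the fan of $X$ in $N_\R$, $\pi\colon N\to N'$ the lattice map underlying $f$, $\sigma_Y\subseteq N'_\R$ the cone of $Y$, $v_1,\ldots,v_r$ the primitive ray generators of $\Delta$, and $b_i\in[0,1]$ the coefficient of $D_{v_i}$ in $B$. The piecewise linear function $\psi$ on $|\Delta|$ with $\psi(v_i)=1-b_i$ represents $-(K+B)$: the $f$-nef hypothesis becomes upper convexity of $\psi$ along each fiber of $\pi$, and the mld hypothesis becomes $\psi(v)\ge a$ for every primitive $v\in N\cap\pi^{-1}(\Int\sigma_Y)$. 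The conclusion reduces to producing a nonzero $m\in\sigma_Y^\vee\cap M'$ with $\gamma(d,a)\langle m,\pi(v_i)\rangle\le\psi(v_i)$ at every ray $v_i$, since upper convexity of $\psi$ then extends the inequality to all of $|\Delta|$.

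\textbf{Base case $d=1$.} A toric contraction from a curve to a positive-dimensional base is forced to be an isomorphism of smooth affine curves near an invariant point. Then $b_0=1-a$ and $H=P$ gives $(X,B+aH)$ with coefficient $1$ at $P$, hence lc, which matches $\gamma(1,a)=a$.

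\textbf{Inductive step.} Assuming the result in dimensions below $d$, I would select a primitive $v_0\in N$ (introducing it as a ray of $\Delta$ by a crepant toric refinement if necessary) with $\pi(v_0)$ in the relative interior of a proper positive-dimensional face $\tau_0$ of $\sigma_Y$; the restricted morphism $\bar f\colon D_{v_0}\to V(\tau_0)$ is then a toric contraction of dimension $d-1$ with positive-dimensional base. Toric adjunction produces a boundary $B_{D_{v_0}}$ with $0\le B_{D_{v_0}}\le\Sigma_{D_{v_0}}$, and $-(K_{D_{v_0}}+B_{D_{v_0}})$ is $\bar f$-nef. Establishing the lower bound $\mld_{\bar f^{-1}(P')}(D_{v_0},B_{D_{v_0}})\ge a^2/d^2$ on the new minimal log discrepancy (with $P'$ the invariant point of $V(\tau_0)$ corresponding to $P$) will enable the inductive hypothesis to produce an invariant hyperplane section $\bar H$ of $V(\tau_0)$ through $P'$ with $(D_{v_0},B_{D_{v_0}}+\gamma(d-1,a^2/d^2)\bar f^*\bar H)$ lc. Since $\tau_0$ is a face of $\sigma_Y$, the defining character of $\bar H$ lifts to an $m\in\sigma_Y^\vee\cap M'$, and the resulting invariant hyperplane section $H$ of $Y$ through $P$ satisfies the target inequality at rays of $\Delta$ adjacent to $v_0$; the upper convexity of $\psi$ on fibers of $\pi$ then propagates the inequality to all remaining rays.

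\textbf{Main obstacle.} The crux is establishing the mld bound $a^2/d^2$ on the restricted pair $(D_{v_0},B_{D_{v_0}})$---this is where the $1/d^2$ factor in the recursion first emerges. I expect it to come from a Minkowski- or volume-type lattice-point estimate in $d$ dimensions, combining the original bound $\psi\ge a$ with the upper convexity of $\psi$ along fibers of $\pi$ to control how much the minimal log discrepancy can drop upon restriction to $D_{v_0}$. A secondary combinatorial subtlety is the selection of $v_0$---and of the face $\tau_0$---in the edge case $\dim\sigma_Y=1$, where no proper positive-dimensional face exists; there a separate direct argument is needed, handling the unique invariant hyperplane $H=V(\sigma_Y)$ by using the $f$-nef hypothesis to bound $\pi(v_i)$ against $\psi(v_i)$ at the horizontal rays.
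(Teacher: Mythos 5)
Your proposal takes a genuinely different route from the paper, but there is a real gap at exactly the point you flag as the ``main obstacle,'' and I do not think the route as sketched can close it.

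The paper does not restrict to an invariant divisor $D_{v_0}$. Instead it encodes the pair $(X,B)$ by the polyhedron $U=\square^*\subset N_\R$, uses the fact that $\mld_{f^{-1}P}\ge a$ means $N\cap \Int(aU)=\emptyset$, and then invokes a lattice-width theorem (Lemma~\ref{ft}, via~\cite[Theorem~5.2]{AI20}): a convex body of dimension $l$ containing no interior lattice points has lattice width at most $l^2$. This produces a surjective $\varphi\colon N\to\Z$ with $\length\varphi(U)\le l^2/a$, and \emph{this} is the sole source of the $1/d^2$ factor. The slicing direction is dictated by the width theorem, not chosen freely. The paper then uses $\varphi$ to build a pencil $X\dashrightarrow\bP^1$, resolves it, and passes to the \emph{general fiber} $X'_1$ of the resolution. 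Because $X'_1$ moves in a base-point-free family, adjunction to it is transparent ($(K_{X'}+B_{X'})|_{X'_1}=K_{X'_1}+B_{X'_1}$ with no correction terms), and the induction runs on $l=\dim N-\dim\sigma_0$, not on $d$, because $\sigma_0\subset\varphi^\perp$ is preserved and has codimension $l-1$ in $N_0$.

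Your proposal instead restricts to a fixed horizontal invariant divisor $D_{v_0}$. Three things go wrong. First, there is no mechanism producing $a^2/d^2$: you expect it from ``a Minkowski- or volume-type lattice-point estimate,'' which is precisely the width theorem, but you neither invoke it nor explain how the choice of $v_0$ would be dictated by it; with an arbitrary $v_0$ on a proper face the restricted mld can be much smaller than $a^2/d^2$. Second, adjunction to a fixed invariant divisor is not clean: to restrict $K_X+B$ you must write $K_X+D_{v_0}+B'$ with coefficient $1$ at $D_{v_0}$, and the pair changes; moreover the different term picks up denominators from the index of $D_{v_0}$, and the resulting pullback boundary on a modification may have negative coefficients. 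The paper handles this negativity explicitly (Lemma~\ref{bnda} bounds $a_{e'}\le w$ for new rays, and the argument then scales by $\lambda=1/w$ against $\Sigma_X$ to restore effectiveness at the cost of a factor $1/w$), which is where the recursion $\gamma(d,a)=\gamma(d-1,a^2/d^2)$ actually gets assembled; your outline does not address it. Third, lifting the hyperplane from $V(\tau_0)$ (or $Y_1$) back to $Y$ with controlled log canonicity is nontrivial: the paper needs Proposition~\ref{extpofcn}, a careful extension-with-bounded-distortion statement for linear functionals on polar duals, plus Lemma~\ref{exthyp}; your claim that ``upper convexity of $\psi$ on fibers of $\pi$ propagates the inequality'' does not substitute for this.

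Your base case $d=1$ is correct, but it corresponds to the paper's $l=1$ case, which is also handled separately there. To repair the proposal along your own lines you would at minimum need to (a) replace the unspecified Minkowski estimate with the width theorem, (b) let the width-minimizing functional $\varphi$ dictate the slicing, (c) pass to a general pencil fiber rather than a fixed divisor $D_{v_0}$ to avoid adjunction corrections, and (d) prove the lifting proposition --- at which point you would have essentially reconstructed the paper's argument.
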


\begin{exmp}
	Let $f\colon X\to \bA^1$ be a toric contraction. Let $X_0=f^*(0)$ be the special fiber,
	let $a>0$ such that $B=\Sigma_X-a X_0$ is effective.
	Then $K+B\sim_\R 0$, $\mld_{X_0}(X,B)=a$, $\mld_{X_0}(X,B+a X_0)=0$.
\end{exmp}

Let $n$ be a positive integer such that $n\gamma(d,a)\ge 1$. Set $B_n=\Sigma_X-\frac{1}{n}f^*H$.
Then 
$$
B\le B_n, n(K+B_n)\sim 0, \mld_{f^{-1}P}(X,B_n)\ge \frac{1}{n}.
$$ 
Therefore the toric case of Shokurov's conjecture~\cite{Sho04} on the existence of klt complements of
bounded index follows from Theorem~\ref{m1}. Moreover, the coefficients of $B$ are arbitrary.

Theorem~\ref{m1} holds in fact in a more general form (see Theorem~\ref{mn}), 
allowing a log Calabi-Yau structure with general boundary on $X$, and an immediate 
application is an effective version of the toric case of McKernan's conjecture:
if moreover $Y$ is $\Q$-factorial, then $\mld_PY\ge \gamma(d,a)$. 

McKernan's conjecture was solved in the toric case by Alexeev and Borisov~\cite{AB12}.
They use global minimal log discrepancies, but their method extends to minimal log
discrepancies over proper closed subsets $Z\subseteq Y$, to give an 
effective lower bound for $\mld_Z Y$ in terms of $\mld_{f^{-1}Z}X$, the dimension and the
$\alpha$-invariant of the 
general fiber of $f$. Our method is different, since we ignore the 
general fiber of $f$. In Theorem~\ref{mn} we work exclusively with the 
special fiber, also allowing log canonical centers which are not contained in the special fiber.  

The proof of Theorem~\ref{m1} is combinatorial, with the following geometric interpretation.
Let $d=\dim X$ and assume $Y$ is affine. 
We first construct a toric rational contraction $\phi\colon X\dashrightarrow \bP^1$ 
induced by an invariant pencil $\Lambda_1\subset |-n_1K-n_1B|$, such that $1\le n_1\le \frac{d^2}{t}$
and for a general member $D_1\in \Lambda_1$, the log Calabi-Yau structure of index $n_1$
$$
K+B+\frac{1}{n_1}D_1\sim_{n_1}0
$$ 
has log canonical singularities, and every lc center dominate $\bP^1$ via $\phi$. 
There are two possibilities.

a) Case $\mld_{f^{-1}P}(X,B+\frac{1}{n_1}D_1)>0$. This corresponds to an invariant
hyperplane section $P\in H\subset Y$ such that $(X,B+\frac{1}{n_1}f^*H)$ has log canonical singularities.
Here $\gamma \ge \frac{1}{n_1}\ge \frac{a}{d^2}$.

b) Case $\mld_{f^{-1}P}(X,B+\frac{1}{n_1}D_1)=0$. Let $\mu\colon X'\to X$ be the normalization
of the graph of $\phi$, let $\mu^*(K+B)=K_{X'}+B_{X'}$ be the log pullback. Let $X'_1$ be the
general fiber of $\phi'\colon X'\to \bP^1$, let $(K_{X'}+B_{X'})|_{X'_1}=K_{X'_1}+B_{X'_1}$
be the adjunction formula. The Stein factorization of the composition $X'_1\subset X'\to X\to Y$ induces a 
germ of toric contraction $f_1\colon X'_1\to Y_1\ni P_1$, with $P_1$ mapped onto $P$ and 
$\mld_{f_1^{-1}P_1}(X'_1,B_{X'_1})\ge \mld_{f^{-1}P}(X,B)$.
We have $\dim X'_1=d-1$ and the claim lifts from $X'_1$ to $X$, provided
$B_{X'_1}$ is effective. If not, the boundedness of $n_1$ allows to make
$B_{X'_1}$ effective after replacing it with a convex combination with the toric log canonical
complement $\Sigma_{X'_1}\in |-K_{X'_1}|$, at the expense of scaling down 
$\mld_{f_1^{-1}P_1}(X'_1,B_{X'_1})$ by a bounded factor. 

The natural category for the above argument is that of log varieties $(X,B)$, with $X$
toric and $B=\sum_i b_i D_i$ where $b_i\in [0,1]$ and $D_i$ a general member
of an invariant linear system $\Lambda_i$ on $X$. However, here we are concerned only
with the singularities of a general member of a linear system, not with the member itself.
Therefore it is more convenient to work with
toric generalized log varieties $(X,B+\bD_X)$, where $X$ is toric,
$B$ is effective invariant, and $\bD$ is an invariant $\R$-free $\R$-b-divisor. 

\begin{thm}\label{mn}
	Let $f \colon X \to Y\ni P$ be a toric contraction, where $Y$ is affine of positive
	dimension, containing an invariant point $P$. Let $(X,B+\bD_X)$ be a toric
	g-log variety structure such that $-(K+B+\bD_X)$ is $f$-nef and 
	$\gmld_{f^{-1}P}(X,B+\bD_X)=a>0$. Then there exists an invariant hyperplane 
	section $P\in H\subset Y$ such that $(X,B+\gamma(d,a) f^*H+\bD_X)$ has g-lc singularities,
	where $d=\dim X$.
\end{thm}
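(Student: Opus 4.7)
The plan is to induct on $d=\dim X$, carrying the b-divisor $\bD$ as a passive addition to the boundary throughout, and following the geometric strategy sketched in the introduction for the ordinary case. For the base case $d=1$, the hypothesis $\dim Y\ge 1$ forces $f$ to be an isomorphism of affine toric curves; taking $H=\{P\}$, one has $f^*H=P$, and the identity $\gamma(1,a)=a$ together with $\gmld_P(X,B+\bD_X)=a$ immediately yields that $(X,B+a\cdot f^*H+\bD_X)$ is g-lc.

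The core of the inductive step is the combinatorial construction of an invariant pencil $\Lambda\subset|-n(K_X+B+\bD_X)|$ with $n\in\Z_{\ge 1}$ subject to a bound of the form $1/n\ge a/d^2$, such that for general $D\in\Lambda$ the toric g-log pair $(X,B+\frac{1}{n}D+\bD_X)$ is g-lc with every g-lc center dominating $\bP^1$ via the induced rational contraction $\phi\colon X\dashrightarrow\bP^1$, and such that one member of $\Lambda$ is of the form $f^*H$ for an invariant hyperplane $H\subset Y$. I would produce this pencil inside the character lattice $M$ of the torus, using the polytope dual to $-(K+B+\bD_X)$ together with the $f$-nef hypothesis (which places pullbacks of characters from $Y$ inside the polytope) and the g-mld lower bound (which controls how deep the polytope sits relative to the cone corresponding to the central fiber $f^{-1}P$).

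With the pencil in place, the analysis splits into two cases. If $\gmld_{f^{-1}P}(X,B+\frac{1}{n}D+\bD_X)>0$ for general $D$, replacing $D$ by $f^*H$ and using $1/n\ge\gamma(d,a)$ finishes the theorem. Otherwise, let $\mu\colon X'\to X$ be the normalization of the graph of $\phi$ and $X'_1$ a general fiber of $\phi'\colon X'\to\bP^1$; g-adjunction gives
$$
\mu^*(K+B+\bD_X)\big|_{X'_1}=K_{X'_1}+B_{X'_1}+\bD_{X'_1},
$$
and the Stein factorization of $X'_1\hookrightarrow X'\to X\to Y$ produces a toric contraction $f_1\colon X'_1\to Y_1\ni P_1$ with $P_1\mapsto P$ and g-mld preserved, $\gmld_{f_1^{-1}P_1}(X'_1,B_{X'_1}+\bD_{X'_1})\ge a$. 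Before invoking induction on dimension I must make $B_{X'_1}$ effective; replacing the g-pair by the convex combination
$$
(1-t)(K_{X'_1}+B_{X'_1}+\bD_{X'_1})+t(K_{X'_1}+\Sigma_{X'_1})
$$
with $t\in[0,1]$ minimal making the boundary effective uses $K_{X'_1}+\Sigma_{X'_1}\sim 0$ to preserve $f_1$-antinefness, and the index bound $n\le d^2/a$ controls the negative part of $B_{X'_1}$ so that $1-t\ge a/d^2$, yielding a new g-mld parameter $a'\ge a^2/d^2$. The induction hypothesis in dimension $d-1$ then produces an invariant hyperplane $H_1\subset Y_1$ making the corresponding g-pair on $X'_1$ g-lc; pushing forward along the toric finite morphism $Y_1\to Y$ descends $H_1$ to an invariant hyperplane $H\subset Y$, and the identity $\gamma(d,a)=\gamma(d-1,a^2/d^2)$ closes the induction.

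The principal obstacle is the combinatorial construction of the pencil with the sharp bound on $n$; this requires a delicate choice of two characters in the $-(K+B+\bD_X)$-polytope adapted to both the $f^{-1}P$-cone and the sublattice of characters pulled back from $Y$. A secondary technical point is the precise coefficient bookkeeping in the convex-combination reduction, which must extract exactly the factor needed to match the recursion $\gamma(d,a)=\gamma(d-1,a^2/d^2)$, and which in turn rests on a clean adjunction formula for toric generalized log pairs.
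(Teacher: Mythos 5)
Your proposal reproduces the strategy sketched in the paper's introduction and is in broad outline the correct one, but two of the steps you treat as routine are in fact the technical heart of the paper's argument, and one of them is, as stated, incorrect.

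\textbf{Descent of the hyperplane from $Y_1$ to $Y$.} You write that ``pushing forward along the toric finite morphism $Y_1\to Y$ descends $H_1$ to an invariant hyperplane $H\subset Y$.'' This does not work: in the construction, $u\colon Y_1\to Y$ is finite \emph{onto its image} $f'(F)$, which is a \emph{proper} toric subvariety of $Y$ (of dimension $\dim Y-1$), so $u_*H_1$ has codimension at least two in $Y$ and is not a hyperplane section. What is actually needed is the reverse operation: one has a lattice functional $\varphi_0$ on $N_0=\ker\varphi$ (equivalently, a character on $Y_1$ defining $H_1$), and one must \emph{extend} it to a functional $\varphi'$ on all of $N$ that lies in $|\Delta|^\vee$, so that it descends through $\pi\colon N\to\bar N$ to a character $\bar\varphi$ on $Y$ with $u^*(\chi^{\bar\varphi})_Y=q\,H_1$ for a bounded positive integer $q$, while keeping the length $\varphi'(U)$ under control. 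Such an extension does not exist for free and is exactly the content of the paper's Proposition~\ref{extpofcn} (Length estimates for extensions of non-negative linear maps) and Lemma~\ref{exthyp}. The boundedness $q<w$ is also what makes the coefficient bookkeeping close; without it the recursion $\gamma(d,a)=\gamma(d-1,a^2/d^2)$ does not follow.

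\textbf{Existence of the pencil with bounded index.} You correctly flag this as the ``principal obstacle,'' but it is worth naming the ingredient: the bound $w\le d^2/a$ (and hence $1/n\ge a/d^2$) comes from a lattice-width theorem for convex bodies with no interior lattice points, applied to the body $tU$ where $U=\square^*$; the paper cites \cite[Theorem~5.2]{AI20} inside Lemma~\ref{ft}. There is no elementary ``choice of two characters'' that produces the sharp $d^2$ bound; this is a flatness-type result. Relatedly, one must first project away the cone $\sigma_0$ of g-lc places (part (a) of Lemma~\ref{ft}), which is why the paper actually inducts on $l=\dim N-\dim\sigma_0$ rather than on $d=\dim X$: this is a strict strengthening, and also handles the possibility that the pencil's horizontal lc places do not drop $\dim\sigma_0$ on passage to $X'_1$. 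Your induction on $d$ alone would still give the stated bound $\gamma(d,a)$, but you would need to argue separately (or redo the width estimate in every dimension) that the invariant you are tracking actually strictly decreases; inducting on $l$ makes this automatic.

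The rest of the outline --- adjunction to the general fiber, the convex combination with $\Sigma_{X'_1}$ with $\lambda=1/w$ to restore effectivity, and the coefficient recursion --- matches the paper.
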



\section{Preliminaries}


Let $X=T_N\emb(\Delta)$ be a toric variety. We use standard notation on toric varieties
(see~\cite{Oda88}). We always choose the anticanonical class to be $-K=\Sigma_X$,
the complement of the open dense torus sitting inside $X$.


\subsection{Support functions}


Let $M=N^*$ be the dual lattice. A subset $A\subset M_\R$ defines
a support function $h_A\colon N_\R \to \R \cup\{-\infty\}$ by $h_A(e)=\inf_{a\in A}\langle a,e\rangle$.
If $A$ has finite cardinality, the infimum is a minimum.
Let $t\in \R_{\ge 0}$. Then $tA=\{ta|a\in A \}\subset M_\R$ satisfies $h_{tA}=t\cdot h_A$.
The sum of two sets $A,A'\subset M_\R$ is $A+A':=\{a+a'| a\in A,a'\in A'\}\subset M_\R$,
and $h_{A+A'}=h_A+h_{A'}$.


\subsection{Toric b-divisors}

 A {\em toric valuation} of $X$ is the valuation
induced by an invariant prime divisor on a toric birational contraction $X'\to X$.
Toric valuations of $X$ are in bijection with the primitive lattice points contained
in the support of the fan defining $X$. Denote by $E_e$ the toric valuation corresponding to 
$e\in N^{prim}\cap |\Delta|$. The center of $E_e$ on $X$ is the orbit $O_\sigma$, where 
$\sigma\in \Delta$ is minimal with the property $e\in \sigma$.

Let $R\in \{\Z,\Q,\R\}$. A {\em toric $R$-b-divisor} of $X$ is a formal sum 
$$
\bD=\sum_{e\in N^{prim}\cap |\Delta|} d_e E_e\ (d_e\in R).
$$
The {\em trace of $\bD$ on a proper toric birational modification} $X'\dashrightarrow X$ 
is the invariant $R$-Weil divisor $\bD_X=\sum_{e \in \Delta_{X'}(1)} d_e V(e)$.
Traces are compatible with pushforwards. Conversely, a toric $R$-b-divisor is the same
as a pushforward compatible collection of invariant $R$-Weil divisors on the proper toric
birational modifications of $X$.

If $\bD_i$ is a finite collection of toric $R$-b-divisors of $X$, then 
$\min_i \bD_i:=\sum_e (\min_i d_{i,e}) E_e$ is a toric $R$-b-divisor of $X$.

An element $m\in M\otimes_\Z R$ induces the toric $R$-b-divisor $(\chi^m)=\sum_e \langle m,e\rangle E_e$
whose trace on $X'$ is the principal $R$-Weil divisor 
$(\chi^m)_{X'}=\sum_{e\in \Delta_{X'}(1)}  \langle m,e\rangle V(e)$.

A finite set $A\subset M\otimes_\Z R$ induces a toric $R$-b-divisor 
$\bD_A:=-\min_{a\in A}(\chi^a)=\sum_e-h_A(e)E_e$.

Let $L$ be an $R$-Cartier invariant $R$-Weil divisor on $X'$. The pullbacks of $L$
on higher models induce the $R$-b-divisor $\overline{L}$ of $X$, called the 
{\em Cartier closure of} $L$. 

Let $\bD$ be a toric $\R$-b-divisor of $X$. 
We say that $\bD$ {\em descends to $X'$} if $\bD_{X'}$ is $\R$-Cartier and $\bD=\overline{\bD_{X'}}$.
We say that $\bD$ is {\em $R$-free} if $\bD=\bD_A$ for some finite set $A\subset M\otimes_\Z R$.
Suppose $X\to Z$ is a proper toric morphism, with $Z$ affine. Then $\bD$ is $R$-free if and only if 
$\bD$ descends to a toric birational modification $X'\to X$ and $\bD_{X'}$ is $R$-Cartier and 
nef over $Z$. 


\subsection{Invariant linear systems}

Let $L$ be an invariant $\R$-Weil divisor on $X$, let $V\subset \Gamma(X,\cO_X(L))$
be an invariant $k$-vector space of finite positive dimension. Being invariant,
$V=\oplus_{a\in A}k\cdot \chi^m$ for a unique finite set $A\subset M\cap \square_L$.
The invariant linear system induced by $V\subset \Gamma(X,\cO_X(L))$ is 
$$
\Lambda=\{(f)_X+L | f\in V \setminus 0 \}.
$$
Since $\Lambda$ is invariant, so is the {\em fixed part} 
$\Fix\Lambda=\min_{D\in \Lambda}D$. One computes 
$$
\Fix \Lambda=\min_{a\in A}(\chi^a)_X+L.
$$
The {\em mobile part} 
$
\Mov\Lambda=\Lambda-\Fix\Lambda
$ 
coincides with the invariant mobile linear system on $X$ induced by  $V=\sum_{a\in A}k\cdot \chi^m\subset
\Gamma(X,\cO_X(L-\Fix \Lambda))$, where $L-\Fix\Lambda=(\bD_A)_X$. Therefore 
$\Mov\Lambda$ depends only on $A$, not on $L$.

The vector space $V$ induces its {\em mobile b-divisor} $\bMov V=-\min_{f\in V\setminus 0}(f)$,
whose restriction to toric modifications coincides with the b-free toric b-divisor $\bD_A$. 

Let $m\in M$, let $A'=A-m$ and $L'=L+(\chi^m)_X$. Then 
$V'=V\cdot \chi^{-m}$,  $V'\subset \Gamma(X,\cO_X(L'))$ induces the same linear system $\Lambda$,
and $\bMov V'=\bMov V+(\chi^m)$. When discussing properties invariant under linear equivalence,
we abuse notation and call $\bMov V$ the {\em mobile b-divisor of} $\Lambda$, denoted
$\bMov \Lambda$.

A finite set $A\subset M$ induces a pushforward compatible family of invariant mobile
linear systems $\Lambda_{X'}$ for all toric birational contractions $X'\to X$.
Indeed, on $X'$ it is induced by $L_{X'}=(\bD_A)_{X'}$ and 
$V=\sum_{a\in A}k\cdot \chi^a\subset \Gamma(X',\cO_{X'}(L_{X'}))$.
The linear system $\Lambda_{X'}$ has no base points if and only if the support function
$h_A$ is linear on each cone of $\Delta_{X'}$, and over such $X'$ it is also compatible
with pullbacks.

\begin{exmp}
Let $D$ be an effective invariant $\R$-Weil divisor on $X$. Set $A=\{0\}$ and $L=D$.
Then $A\subset M\cap \square_L$ induces the fixed linear system $\Lambda=\{D \}$,
with $\Fix \Lambda=D$ and $\bMov V=0$.
\end{exmp}

\begin{exmp}
	Let $\Lambda_i$ be the invariant linear system on $X$ induced by 
	$A_i\subset M\cap \square_{L_i}$, for $i=1,2$. The sum of linear systems
	$\Lambda_1+\Lambda_2$ is again invariant, induced by $A_1+A_2\subset M\cap \square_{L_1+L_2}$.
	Then $\Fix (\Lambda_1+\Lambda_2)=\Fix\Lambda_1+\Fix\Lambda_2$ and 
	$\bMov(V_1\cdot V_2)=\bMov V_1+\bMov V_2$. 
\end{exmp}


\subsection{Length estimates for extension of non-negative linear maps}

Let $\varphi\colon \Lambda\to \Z$ be a surjective homomorphism, where $\Lambda$
is a lattice of dimension at least two. Let $\sigma\subset \Lambda_\R$ be a rational polyhedral convex cone.
Let $C\subset \sigma$ be a convex set such that $\Lambda\cap C$ contains the origin and a set of generators of the cone $\sigma$. In particular, 
$
\sigma=\cup_{t>0}tC.
$
Suppose $\varphi(C)\subset \R$ is a compact interval of positive length $w$,
containing $0$ in its interior. 
Denote $\Lambda_0=\Lambda\cap \varphi^\perp$ and $C_0=C\cap \varphi^\perp\subset \Lambda_{0,\R}$.

\begin{prop}\label{extpofcn}
	Let $\varphi_0\colon \Lambda_0\to \Z$ be a lattice homomorphism such that
	$\varphi_0(C_0)=[0,l_0]$ for some $l_0>0$. Then there exists a lattice
	homomorphism $\varphi'\colon \Lambda\to \Z$ such that:
	\begin{itemize}
		\item $\varphi'|_{\Lambda_0}=q\varphi_0$ for some integer $1\le q< w$.
		\item $\varphi'(C)=[0,l']$ for some $0<l'\le wl_0$.
	\end{itemize}
\end{prop}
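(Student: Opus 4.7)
The plan is to split $\varphi$ via a lift, parametrize candidate extensions by a single integer $n$, and then construct the desired pair $(q,n)$ explicitly from a lattice generator of $\sigma$ lying in $C$.

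First, I choose a lift $e\in\Lambda$ with $\varphi(e)=1$, giving the decomposition $\Lambda=\Lambda_0\oplus\Z e$, and fix the lattice extension $\tilde\varphi_0\colon\Lambda\to\Z$ of $\varphi_0$ defined by $\tilde\varphi_0(e)=0$. Every lattice homomorphism $\varphi'\colon\Lambda\to\Z$ with $\varphi'|_{\Lambda_0}=q\varphi_0$ is then of the form $\varphi'=q\tilde\varphi_0+n\varphi$ for a unique integer $n$. Writing $c=x+te$ with $t=\varphi(c)$, the non-negativity condition $\varphi'\geq 0$ on $C$ becomes $q\varphi_0(x)+nt\geq 0$. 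Set $m\colon[-w_-,w_+]\to\R$, $m(t)=\inf\{\varphi_0(x):x+te\in C\}$, which is convex by convexity of $C$ and satisfies $m(0)=0$ by the hypothesis $\varphi_0(C_0)=[0,l_0]$. The non-negativity condition translates to $-n/q$ being a subgradient of $m$ at $0$, equivalently $n/q\in[s_-,s_+]:=[-m'(0^+),-m'(0^-)]$; convexity of $m$ yields $s_-\leq s_+$.

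The crux is to produce such $(q,n)$ with $1\leq q<w$. I would exploit that $\sigma=\bigcup_{t>0}tC$ identifies $\sigma$ as the tangent cone to $C$ at the origin, while $\Lambda\cap C$ contains generators of $\sigma$ by hypothesis. Consequently, $m'(0^+)$ is realized along an extremal ray of $\sigma$ spanned by a lattice point of $\Lambda\cap C$: among the lattice generators $v\in\Lambda\cap C$ of $\sigma$ with $\varphi(v)>0$, I choose $v_+$ minimizing $\tilde\varphi_0(v)/\varphi(v)$. Then $m'(0^+)=\tilde\varphi_0(v_+)/\varphi(v_+)$, and the pair $(q,n):=(\varphi(v_+),-\tilde\varphi_0(v_+))$ satisfies $n/q=s_-$ and $1\leq q=\varphi(v_+)\leq w_+<w$, so $\varphi'\geq 0$ on $C$.

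For the bound $l'\leq wl_0$: by construction $\varphi'(0)=\varphi'(v_+)=0$, so the hyperplane $\{\varphi'=0\}$ supports $C$ along the ray $\R_{\geq 0}v_+$. The maximum of $\varphi'$ on $C$ is attained at an extremal point on the opposite side; for a lattice generator $v_-$ of $\sigma$ with $\varphi(v_-)<0$, a direct computation yields
\[
\varphi'(v_-)=\varphi(v_+)\tilde\varphi_0(v_-)-\tilde\varphi_0(v_+)\varphi(v_-),
\]
a $2\times 2$ determinant that equals $wl_0$ exactly when the chord through $v_-$ and $v_+$ passes through the right endpoint of $C_0$. Combined with the concavity of the upper envelope $M(t):=\sup\{\varphi_0(x):x+te\in C\}$ (with $M(0)=l_0$) and the width constraint $t\in[-w_-,w_+]$, this yields $l'=\max_{t}(qM(t)+nt)\leq wl_0$. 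The main obstacle is the rigorous identification of $m'(0^+)$ with the lattice ratio $\tilde\varphi_0(v_+)/\varphi(v_+)$; this requires carefully exploiting that the extremal rays of $\sigma$ are spanned by lattice points of $\Lambda\cap C$, so that the slope $m(t)/t$ as $t\to 0^+$ is attained along such a ray.
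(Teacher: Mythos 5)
Your parametrization of the extensions as $\varphi'=q\tilde\varphi_0+n\varphi$ and the identification of the feasibility interval $n/q\in[s_-,s_+]$ is fine and is essentially a primal reading of the paper's dual argument. The gap is in the choice of endpoint and in the length estimate. You always take $n/q=s_-$, realized by a generator $v_+$ on the side $\varphi>0$, which only gives $q=\varphi(v_+)\le w_+$. The paper instead makes a case distinction on $w_-\le w_+$ versus $w_+\le w_-$ and chooses the endpoint of $[s_-,s_+]$ realized by a generator on the \emph{narrower} side, so that $q\le\min(w_-,w_+)$; this stronger bound on $q$ is exactly what drives the estimate $l'\le wl_0$ (via Steps~1--2, which show $-\frac{1}{l_0}\frac{\min(w_-,w_+)}{w}(c\varphi_1+\varphi_2)\in C^*$). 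With your fixed choice, the bound $l'\le wl_0$ is simply false. A concrete counterexample: take $\Lambda=\Z^2$, $\varphi(x,y)=y$, $\tilde\varphi_0(x,y)=x$, $\sigma$ the cone generated by $(1,10)$ and $(1,-1)$, and $C=\mathrm{conv}\{(0,0),(1,10),(10,5),(1,-1)\}$. Then $\varphi(C)=[-1,10]$, so $w_-=1$, $w_+=10$, $w=11$; $C_0=[0,5/2]\times\{0\}$, so $l_0=5/2$ and $wl_0=55/2$. Your rule picks $v_+=(1,10)$, hence $q=10$, $n=-1$, $\varphi'(x,y)=10x-y$, and $\varphi'(10,5)=95>55/2$.

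Your heuristic justification of the length bound also fails at a specific step: you assert that the maximum of $\varphi'$ on $C$ is attained at a lattice generator $v_-$ of $\sigma$ with $\varphi(v_-)<0$, and then argue via a $2\times2$ determinant and concavity of $M$. But extremal points of $C$ need not lie on extremal rays of $\sigma$ (in the example above the maximizer $(10,5)$ lies in the interior of $\sigma$), so the determinant identity you invoke does not control $l'$. The concavity of $M$ alone does not rescue the estimate when $q$ is as large as $w_+$. To repair the proof you would need to choose the endpoint $s_\pm$ of the feasibility interval according to which of $w_\pm$ is smaller, exactly as in Step~3 of the paper, and then run the length bound with $q\le\min(w_-,w_+)$.
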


\begin{proof} The inclusion of lattices $j\colon \Lambda_0\subset \Lambda$ is a direct
	summand. It dualizes to a lattice projection $j^*\colon \Lambda^*\to \Lambda_0^*$,
	which identifies $\Lambda_0^*$ with the quotient $\Lambda^*/\Z \varphi$. 
	Let $C^*\subset \Lambda^*\otimes_\Z \R$ be the convex set polar dual to $C\subset \Lambda_\R$. 
	Let $C^*_0\subset \Lambda_0^*\otimes_\Z \R$ be the $j^*$-image of 
	$C\subset \Lambda^*\otimes_\Z \R$. Then $C_0\subset \Lambda_0\otimes_\Z \R$ is polar 
	dual to $C_0^*\subset \Lambda^*_0\otimes_\Z \R$. 
	
	Let $\varphi(C)=[-w_-,w_+]$, with $w_-,w_+>0$ and $w_-+w_+=w$.
	Denote $\varphi_1=\varphi$. Let $\varphi_2\colon \Lambda\to \Z$ be an arbitrary extension of 
	$\varphi_0$. Then $\varphi_1,\varphi_2\in \Lambda^*$ are linearly independent in $\Lambda^*_\R$.
	
	 By assumption, $\varphi_0$ is non-negative 
    on $\cup_{t>0}tC_0=\sigma\cap \varphi^\perp$. Equivalently, there exists $c\in \R$ such that 
	$$
	c\varphi_1+\varphi_2\in \sigma^\vee.
	$$
	By assumption, there exist finitely many elements $e_i \in \Lambda^{prim}\cap C$ such that
	$\sigma=\sum_i\R_{\ge 0}e_i\subset \Lambda_\R$. The condition for $c\varphi_1+\varphi_2\in \sigma^\vee$ 
	becomes
	$$
	\max_{\varphi_1(e_i)>0}\frac{-\varphi_2(e_i)}{\varphi_1(e_i)}\le c\le
	\min_{\varphi_1(e_i)<0}\frac{\varphi_2(e_i)}{-\varphi_1(e_i)}.
	$$
	Denote $\gamma'=\frac{1}{l_0}$. 
	Since $-\gamma'\varphi_0 \in C^*_0$, there exists $t\in \R$ such that 
	$t\varphi_1-\gamma'\varphi_2\in C^*$. Denote $c'=\gamma'c+t$. Then 
	$$
	c'\varphi_1-\gamma'(c\varphi_1+\varphi_2)\in C^*.
	$$
	
	{\em Step 1}: We claim that $-\frac{1}{w_+}\le c'\le \frac{1}{w_-}$. Indeed, from $\sigma^\vee\subseteq C^*$
	we deduce
	 $$
	c'\varphi_1=
	(c'\varphi_1-\gamma'(c\varphi_1+\varphi_2))+\gamma'(c\varphi_1+\varphi_2)
	\in C^*.
	$$
	The assumption $\varphi(C)=[-w_-,w_+]$ is polar dual to 
	$\{x\in \R | x \varphi_1\in C^* \}=[-\frac{1}{w_+},\frac{1}{w_-}]$. Therefore the claim holds.
	
	{\em Step 2}: We claim that $-\gamma'\frac{\min(w_-,w_+)}{w_-+w_+}\cdot (c\varphi_1+\varphi_2)\in C^*$.
	
	Indeed, suppose $c'\le 0$. Then 
	$$
	-\epsilon\gamma'(c\varphi_1+\varphi_2)=\epsilon(c'\varphi_1-\gamma'(c\varphi_1+\varphi_2))+(1-\epsilon)\frac{\varphi_1}{w_-}\in C^*,
	$$
	where $\epsilon=\frac{1}{1-c'w_-}\in (0,1]$. Since $c'\ge -\frac{1}{w_+}$, we deduce $\epsilon\ge  \frac{w_+}{w_-+w_+}$.
	
	Suppose $c'\ge 0$. Then 
	$$
	-\epsilon\gamma'(c\varphi_1+\varphi_2)=\epsilon(c'\varphi_1-\gamma'(c\varphi_1+\varphi_2))+(1-\epsilon)\frac{-\varphi_1}{w_-}\in C^*,
	$$
	where $\epsilon=\frac{1}{1+c'w_+}\in (0,1]$.
	Since $c'\le \frac{1}{w_-}$, we deduce $\epsilon\ge  \frac{w_-}{w_-+w_+}$.
	
	{\em Step 3}:
	Suppose $w_-\le w_+$. Choose $c=\min_{\varphi_1(e_i)<0}\frac{\varphi_2(e_i)}{-\varphi_1(e_i)}$.
	From Step 2 we have
	$$
	-\gamma'\frac{w_-}{w_-+w_+}\cdot (c\varphi_1+\varphi_2)\in C^*.
	$$
	There exists $e_i$ such that $\varphi_1(e_i)<0$ and $c=\frac{\varphi_2(e_i)}{-\varphi_1(e_i)}$.
	Here $\varphi_1(e_i)=-q$ for some positive integer $q$. Set 
	$\varphi'=\varphi_2(e_i)\varphi_1-\varphi_1(e_i) \varphi_2 \in \Lambda\setminus 0$.
	Then $\varphi'=q(c\varphi_1+\varphi_2)\in \sigma^\vee$, $j^*(\varphi')=q\varphi_0$ and
	$$
	-\gamma'\frac{w_-}{w_-+w_+}\frac{1}{q}\varphi' \in C^*.
	$$
	Since $e_i \in C$ and $\varphi_1(C)\subset [-w_-,w_+]$, 
	we obtain $q\le w_-$. Therefore $q<w$ and 
	$$
	\frac{w_-}{w_-+w_+}\frac{1}{q} \ge  \frac{1}{w}.
	$$
	We obtain $-\frac{\gamma'}{w}\varphi'\in C^*$. Then $\varphi'(C)$ is contained
	in $[0,\frac{w}{\gamma'}]=[0,wl_0]$.
	
	If $w_+\le w_-$ we choose $c$ to be the other extremal point, and argue similarly.
\end{proof}


\section{General(-ized) log pair structures on toric varieties}


Let $X=T_N\emb(\Delta)$ be a toric variety. We consider (generalized) log pair structures $(X,B)$
which can be resolved by toric modifications, and therefore log discrepancies 
and minimal log discrepancies can be computed combinatorially. Only the
invariant part of $B$ may have negative coefficients. 
If $B$ is effective, we say $(X,B)$ is a (generalized) log variety.


\subsection{Invariant boundary}


Let $B$ be an invariant $\R$-divisor
such that $K+B$ is $\R$-Cartier. The latter is equivalent to: for every 
$\sigma\in \Delta(top)$, there exists $\psi_\sigma\in M_\R$ such that 
$(\chi^{\psi_\sigma})+K+B=0$ on $U_\sigma$, that is 
$\langle \psi_\sigma,e_i\rangle=1-\mult_{V(e_i)}B$ for every $e_i\in \sigma(1)$.

Let $\mu\colon X'\to X$ be a toric birational contraction such that $X'$ is smooth.
Then $(X',\Sigma_{X'})$ is log smooth. Let $\mu^*(K+B)=K_{X'}+B_{X'}$ be the
log pullback. Then $B_{X'}$ is an invariant $\R$-divisor, with coefficients
$$
\mult_{V(e)}B_{X'}=1-\langle \psi_\sigma,e\rangle \ (e\in \Delta_{X'}(1),e\in \sigma).
$$
In other words, the log discrepancy of $(X,B)$ in a toric valuation $E_e\ (e\in N^{prim}\cap |\Delta|)$ 
of $X$, is computed as follows:
$$
a_{E_e}(X,B)=\langle \psi_\sigma,e\rangle\ (e\in \sigma\in \Delta(top)).
$$
In particular, $(X,B)$ has log canonical singularities if and only if $\psi_\sigma\in \sigma^\vee$ for every
$\sigma\in \Delta(top)$.


\subsection{General boundary (Alexeev)}


Let $X=T_N\emb(\Delta)$ be a toric variety, let $B=B^{inv}+\sum_j b_jS_j$,
where $B^{inv}$ is an invariant $\R$-Weil divisor on $X$, $b_j\ge 0$ and 
$S_j$ is a general member of an invariant mobile linear system $\Lambda_j$ on $X$.
Let $\Lambda_j$ be defined by a finite set $A_j\subset M$.
The condition that $K+B$ is $\R$-Cartier is equivalent to:
for every $\sigma\in \Delta(top)$, there exists $\psi_\sigma\in M_\R$ such that 
$$
\langle \psi_\sigma,e\rangle-h_{\sum_j b_jA_j}(e)=1-\mult_{V(e)}B^{inv}\ \forall e\in \sigma(1).
$$
Let $\mu\colon X'\to X$ be a toric birational contraction such that $X'$ is smooth,
and $\bMov \Lambda_j$ descends to $X'$ for all $j$.
Let $S'_j$ be a general member of the base point free linear system induced by $\Lambda_j$
on $X'$. Then $S'_j$ is smooth and $(X',\Sigma_{X'}+\sum_j S'_j)$ is log smooth.
We may take $S_j=\mu_*(S'_j)$. Then 
$$
\mu^*(K+B)=K_{X'}+\sum_i(1-a_i)E_i+\sum_jb_jS'_j,
$$
where $E_i=V(e_i)$ are the invariant prime divisors of $X'$ and 
$
a_i=\langle \psi_\sigma,e_i\rangle-h_{\sum_j b_jA_j}(e_i) \ (e_i\in \sigma).
$
In other words, the log discrepancy of $(X,B)$ in a toric valuation $E_e\ (e\in N^{prim}\cap |\Delta|)$ 
of $X$ is computed as follows:
$$
a_{E_e}(X,B)=\langle \psi_\sigma,e\rangle-h_{\sum_j b_jA_j}(e)   \ (e\in \sigma\in \Delta(top)).
$$
In particular, $(X,B)$ has log canonical singularities if and only if $b_j\le 1$ for all $j$, and 
$\psi_\sigma\in \Conv(\sum_jb_jA_j)+\sigma^\vee$ for every $\sigma\in \Delta(top)$.

The definition covers the case when the invariant linear systems have fixed parts.
Let $B=B^{inv}+\sum_jb_jD_j$ where $b_j\ge 0$ and $D_j$ is a general member of 
an invariant linear system of $\R$-Weil divisors on $X$. 
Let $\Lambda_j=F_j+\Lambda^m_j$ be the fixed-mobile decomposition.
Then $D_j=F_j+S_j$, where $S_j$ is a general member of the invariant mobile
linear system $\Lambda^m_j$ (zero if $\Lambda_j$ is fixed). Then 
$$
K+B^{inv}+\sum_jb_jD_j=K+(B^{inv}+\sum_j b_jF_j)+\sum_j b_jS_j.
$$

For each $j$, choose a positive integer $n_j$. Let 
$\tilde{\Lambda}_j=\Lambda_j+\cdots+\Lambda_j$ $(n_j$-times), let $\tilde{D}_j$
be the general member of $\tilde{\Lambda}_j$. Then 
$K+B^{inv}+\sum_jb_jD_j$ and $K+B^{inv}+\sum_j\frac{b_j}{n_j}\tilde{D}_j$ 
have the same log discrepancies in toric valuations.


\subsection{Generalized boundary (Birkar-Zhang)}


Let $B=B^{inv}+\sum_j b_jS_j$ where
$B^{inv}$ is an invariant $\R$-Weil divisor on $X$, $b_j\ge 0$ and $S_j$ is a general
member of an invariant mobile linear system $\Lambda_j$ on $X$.
Let $\bD$ be an $\R$-free invariant $\R$-b-divisor of $X$,  induced by a finite set
$A\subset M_\R$. Thus 
$$
\bD=\sum_{e\in N^{prim} \cap |\Delta| }-h_A(e)E_e.
$$
Let $\Lambda_j$ be induced by the finite set $A_j\subset M$.
We assume that $K+B+\bD_X$ is $\R$-Cartier. Equivalently, for every 
$\sigma\in \Delta(top)$ there exists $\psi_\sigma\in M_\R$ such that 
$(\chi^{\psi_\sigma})+K+B+\bD_X=0$ on $U_\sigma$, i.e. 
$$
\langle \psi_\sigma,e_i \rangle-h_{A+\sum_j b_jA_j}(e_i)=1-\mult_{V(e_i)}B^{inv}\ \forall e_i\in \sigma(1).
$$
Let $\mu\colon X'\to X$ be a toric birational contraction such that $X'$ is smooth,
$\bD$ and all $\bMov \Lambda_j$ descend to $X'$. Then
$$
\mu^*(K+B+\bD_X)=K_{X'}+\sum_i (1-a_i)E_i+\sum_jb_jS'_j+\bD_{X'},
$$
where 
$$
a_i=\langle \psi_\sigma,e_i\rangle-h_{A+\sum_jb_jA_j}(e_i)\ ( e_i\in \sigma ).
$$

In particular, $(X,B+\bD_X)$ has g-log canonical singularities if and only if 
$\psi_\sigma\in \Conv(A+\sum_j b_jA_j)+\sigma^\vee$ for every $\sigma$.

For $m\in M_\R$, denote $\bD'=\bD+(\chi^m)$. Then 
$(X,B+\bD_X)$ and $(X,B+\bD'_X)$ have the same g-log discrepancies in toric valuations.
Indeed, g-log discrepancies depend only on $\bD-\overline{\bD_X}$, which does not change
after replacing $\bD$ by $\bD'$.

Let $K+B^{inv}+\sum_jb_jD_j$ be a log pair structure with general boundary
induced by invariant linear systems $\Lambda_j$ on $X$. 
Let $\bD=\sum_j b_j\bMov \Lambda_j$.
Then $K+B^{inv}+\sum_j b_j\Fix\Lambda_j+\bD_X$ is a generalized log pair 
structure, having the same log discrepancies in toric valuations as 
$K+B^{inv}+\sum_jb_jD_j$. In particular, for an invariant closed subset $Z\subseteq \Sigma_X$
we have 
$$
\mld_Z(X,B^{inv}+\sum_jb_jD_j)=\gmld_Z(X,B^{inv}+\sum_j b_j\Fix\Lambda_j+\bD_X).
$$
Due to this property, when focusing only on the singularities of general members 
of linear systems, it is convenient to replace general boundaries by generalized ones.
The same holds for a combination of general and generalized pair: 
$$
(X,B^{inv}+\sum_jb_j\Lambda_j^{gen}+\bD_X) \leadsto
(X,B^{inv}+\sum_j b_j\Fix \Lambda_j+(\sum_jb_j\bMov \Lambda_j+\bD)_X).
$$


\section{Anti-minimal models}


Let $f\colon X\to Y$ be a proper toric contraction, with $Y$ affine and containing an invariant point $P$. 
The contraction $f\colon X=T_N\emb(\Delta)\to Y=T_{\bar{N}}\emb(\bar{\sigma})$ corresponds to 
a lattice projection $\pi\colon N\to \bar{N}$ such that $|\Delta|=\pi^{-1}(\bar{\sigma})$.
In particular, $|\Delta|$ is a rational polyhedral convex cone.
Note that $Y$ contains an invariant point $P$ only if $\dim \bar{\sigma}=\dim \bar{N}$,
which is equivalent to $\dim |\Delta| =\dim N$. Moreover, $N$ and $\Delta$ determine
$f$, since $\bar{N}$ is the quotient of $N$ modulo $|\Delta|\cap (-|\Delta|)$, and 
$\bar{\sigma}$ is the image of $|\Delta|$.

Let $(X,B+\bD_X)$ be a g-log pair structure such that $-(K+B+\bD_X)$ 
is $f$-nef. Since $f$ is toric and $Y$ is affine, this is equivalent to $-(K+B+\bD_X)$ being 
$\R$-free. In other words, for every $\sigma\in \Delta(top)$, there exists 
$\psi_\sigma\in M_\R$ such that $(\chi^{-\psi_\sigma})-(K+B+\bD_X)$ is effective on $X$,
and zero on $U_\sigma$. In particular,
$\square_{-K-B-\bD_X}=\cap_{\sigma\in \Delta(top)}-\psi_\sigma+\sigma^\vee$.

Let $X=T_N\emb(\Delta)$, let $\bD$ be defined by a finite set $A\subset M_\R$. 
The polyhedral convex set
$$
\square:=\Conv(A)+\square_{-K-B-\bD_X}\subset M_\R
$$
does not depend on the choice of $A$.

\begin{lem}
	Let $e\in N^{prim}\cap |\Delta|$. The g-log discrepancy of $(X,B+\bD_X)$
	in the toric valuation $E_e$ equals $-h_\square(e)$.
\end{lem}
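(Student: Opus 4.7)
The plan is to compute $h_\square(e)$ directly from the Minkowski decomposition $\square=\Conv(A)+\square_{-K-B-\bD_X}$ and match the result with the formula for the g-log discrepancy given in the Generalized boundary subsection, namely $a_{E_e}(X,B+\bD_X)=\langle \psi_\sigma,e\rangle-h_A(e)$ for $e\in \sigma\in \Delta(top)$.

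The first step is purely formal: since the support function is additive under Minkowski sums, and since a linear functional attains its infimum on the convex hull of a set at an extreme point, we have
\[
h_\square(e)=h_{\Conv(A)}(e)+h_{\square_{-K-B-\bD_X}}(e)=h_A(e)+h_{\square_{-K-B-\bD_X}}(e).
\]
So the whole content is in evaluating $h_{\square_{-K-B-\bD_X}}(e)$.

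The heart of the proof is to show that, for any $\sigma\in \Delta(top)$ containing $e$,
\[
h_{\square_{-K-B-\bD_X}}(e)=-\langle \psi_\sigma,e\rangle.
\]
For the upper bound, the defining property of $\psi_\sigma$ recalled just above the lemma says that $(\chi^{-\psi_\sigma})-(K+B+\bD_X)$ is effective on $X$; this is exactly the statement that $-\psi_\sigma\in \square_{-K-B-\bD_X}$, hence $h_{\square_{-K-B-\bD_X}}(e)\le \langle -\psi_\sigma,e\rangle$. For the lower bound, the description $\square_{-K-B-\bD_X}=\cap_{\tau\in \Delta(top)}(-\psi_\tau+\tau^\vee)$ shows that any $m\in \square_{-K-B-\bD_X}$ satisfies $m+\psi_\sigma\in \sigma^\vee$; pairing with $e\in \sigma$ gives $\langle m,e\rangle\ge -\langle \psi_\sigma,e\rangle$. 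Taking the infimum over such $m$ yields the matching lower bound.

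Putting the pieces together, $-h_\square(e)=\langle \psi_\sigma,e\rangle-h_A(e)$, which is the formula for $a_{E_e}(X,B+\bD_X)$ recorded earlier. The only non-tautological ingredient is the two-sided estimate for $h_{\square_{-K-B-\bD_X}}(e)$, and even this is straightforward once the polytope $\square_{-K-B-\bD_X}$ has been identified as the intersection $\cap_\tau(-\psi_\tau+\tau^\vee)$. A minor caveat to keep in mind while writing: the additivity $h_{A_1+A_2}=h_{A_1}+h_{A_2}$ and the identity $h_{\Conv A}=h_A$ are taken from the Support functions subsection, and the fact that every $e\in |\Delta|$ lies in some top-dimensional $\sigma$ uses the hypothesis $\dim |\Delta|=\dim N$, which follows from $Y$ containing an invariant point.
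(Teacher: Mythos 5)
Your argument is correct, and it reaches the same identity by a genuinely cleaner route. The paper fixes a resolution $\mu\colon X'\to X$ on which $\bD$ descends, picks a single top cone $\sigma'\ni e$ of $\Delta_{X'}$ together with a single $a\in A$ for which $h_A=\langle a,\cdot\rangle$ on $\sigma'$, re-derives from scratch the equality $a_{E_e}(X,B+\bD_X)=\langle\psi_\sigma-a,e\rangle$ on that chart, and then exhibits in one stroke the sandwich $a-\psi_\sigma\in\square\subseteq a-\psi_\sigma+(\sigma')^\vee$ which pins down $h_\square(e)$; this makes the proof self-contained (it does not rely on a closed-form log-discrepancy formula for arbitrary $e$). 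You instead exploit the Minkowski additivity $h_{\Conv(A)+\square_{-K-B-\bD_X}}=h_A+h_{\square_{-K-B-\bD_X}}$, compute the second term directly from the intersection presentation $\square_{-K-B-\bD_X}=\cap_\tau(-\psi_\tau+\tau^\vee)$, and then quote the log-discrepancy formula from the generalized-boundary subsection. This is shorter and avoids choosing a resolution, at the cost of one small gap: in the generalized-boundary subsection, the formula $a_{E_e}(X,B+\bD_X)=\langle\psi_\sigma,e\rangle-h_A(e)$ is only written down for rays $e_i$ of a resolution $X'$ on which $\bD$ descends, and (unlike the invariant and general subsections) there is no explicit ``in other words'' sentence extending it to all $e\in N^{\mathrm{prim}}\cap|\Delta|$. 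You should add a line observing that any such $e$ appears as a ray on a sufficiently high resolution where $\bD$ descends, so the formula does hold for all $e$ --- which is, in effect, the step the paper's own proof carries out explicitly. With that remark supplied, your proof is complete and arguably more modular than the original.
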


\begin{proof}
Let $\mu\colon X'\to X$ be a toric birational contraction such that $\bD$ descends to
$X'$. Let $e\in \sigma'\in \Delta_{X'}(top)$, let $\sigma'\subset \sigma\in \Delta(top)$.	
We have $(\chi^{\psi_\sigma})+K+B+\bD_X=0$ on $U_\sigma$.
Then $(\chi^{\psi_\sigma})_{X'}+K_{X'}+B_{X'}+\bD_{X'}=0$ on $U_{\sigma'}$.
There exists $a\in A$ such that $(\chi^a)+\bD_{X'}=0$ on $U_{\sigma'}$.
That is, $h_A(e')=\langle a,e'\rangle$ for every $e' \in \sigma'$.

Since $\bD$ descends to $X'$, the g-log discrepancy of $(X,B+\bD_X)$ in $E_e$
is the same as the log discrepancy of the log pair $(X',B_{X'})$.
We have $(\chi^{\psi_\sigma-a})_{X'}+K_{X'}+B_{X'}=0$ on $U_{\sigma'}$. Therefore
$$
a_{E_e}(X,B+\bD_X)=\langle \psi_\sigma-a,e\rangle.
$$
We have $\Conv(A)\subset a+\sigma'^\vee$. Since $-(K+B+\bD_X)$ is $\R$-free,
$\square_{-(K+B+\bD_X)}\subset -\psi_\sigma+\sigma^\vee$. We deduce
$$
a-\psi_\sigma\in \square\subset a-\psi_\sigma+{\sigma'}^\vee.
$$
Therefore 
$
a_{E_e}(X,B+\bD_X)=-h_\square(e).
$
\end{proof}

\begin{prop}\label{lccr}
	$(X,B+\bD_X)$ has g-lc singularities if and only if $\square$ contains the origin.
\end{prop}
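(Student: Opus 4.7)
The plan is to reformulate g-lc singularities as a condition on the support function $h_\square$ via the preceding lemma, and then invoke the standard duality between a closed convex set and its support function. By the lemma just established, $a_{E_e}(X,B+\bD_X)=-h_\square(e)$ for every $e\in N^{prim}\cap|\Delta|$. Since all toric g-log discrepancies being $\ge 0$ is what defines g-lc, and since $h_\square$ is positively homogeneous, the g-lc condition is equivalent to $h_\square(e)\le 0$ for every $e\in|\Delta|$. My goal is thus to show that this inequality on the cone $|\Delta|$ is equivalent to $0\in\square$.

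The forward direction is immediate: if $0\in\square$, then $h_\square(e)=\inf_{a\in\square}\langle a,e\rangle\le\langle 0,e\rangle=0$ for every $e\in N_\R$. For the converse, I would first record structural properties of $\square$. It is a closed polyhedral convex subset of $M_\R$, being the Minkowski sum of the bounded polytope $\Conv(A)$ with the polyhedral set $\square_{-K-B-\bD_X}=\bigcap_{\sigma\in \Delta(top)}(-\psi_\sigma+\sigma^\vee)$. The recession cone of this intersection is $\bigcap_{\sigma}\sigma^\vee=(\bigcup_\sigma\sigma)^\vee=|\Delta|^\vee$, and Minkowski-summing a bounded set does not change the recession cone, so the recession cone of $\square$ is $|\Delta|^\vee$.

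The key step is then to note that $h_\square(e)=-\infty$ for every $e\notin|\Delta|$. By biduality $|\Delta|=(|\Delta|^\vee)^\vee$, so any such $e$ admits a $v\in|\Delta|^\vee$ with $\langle v,e\rangle<0$; then for any $a\in\square$, the translates $a+tv$ ($t\ge 0$) remain in $\square$ while $\langle a+tv,e\rangle\to-\infty$. Hence the g-lc condition $h_\square(e)\le 0$ for $e\in|\Delta|$ automatically extends vacuously to $h_\square(e)\le 0$ for all $e\in N_\R$. Since $\square$ is a nonempty closed convex set, it coincides with the intersection of its supporting half-spaces, $\square=\{m\in M_\R:\langle m,e\rangle\ge h_\square(e)\ \forall e\in N_\R\}$, so $h_\square\le 0$ everywhere is equivalent to $0\in\square$. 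The only nontrivial step is the recession cone identification, which is exactly what legitimizes the passage from ``for all $e\in|\Delta|$'' to ``for all $e\in N_\R$''; the rest is formal.
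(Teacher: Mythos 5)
Your proposal is correct and follows essentially the same route as the paper's proof: reduce g-lc to the condition $h_\square\le 0$ on $|\Delta|$ via the preceding lemma, then translate that to $0\in\square$ by convex duality. You simply spell out the final equivalence in more detail (via the recession-cone computation and the support-function characterization of $\square$), where the paper states it tersely as an immediate fact.
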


\begin{proof}
	Since $\bD$ descends to a toric modification of $X$, the left hand side is 
	equivalent to $h_\square(e)\le 0$ for all $e\in N^{prim}\cap |\Delta|$,
	which is equivalent to the same property for all $e\in |\Delta|$. The latter
	is equivalent to $0\in \square$.
\end{proof}

\begin{lem}\label{sqc1}
	$\square=\{ m\in M_\R |  (X,B+(\chi^{-m})_X+\bD_X) \text{ has g-lc singularities}  \}$.
\end{lem}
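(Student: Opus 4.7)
The plan is to apply Proposition~\ref{lccr} to the perturbed generalized pair $(X, B + (\chi^{-m})_X + \bD_X)$ for each $m \in M_\R$, treating $(\chi^{-m})_X$ as a modification of the boundary on $X$ while leaving the $\R$-b-divisor $\bD$ and its defining finite set $A\subset M_\R$ unchanged.

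First I will check that the perturbed pair still satisfies the running hypotheses of this section. Since $(\chi^{-m})_X=\dv(\chi^{-m})$ is a principal invariant $\R$-Cartier divisor on $X$, the sum $K + B + (\chi^{-m})_X + \bD_X$ remains $\R$-Cartier and $-(K + B + (\chi^{-m})_X + \bD_X)\sim_\R -(K+B+\bD_X)$ remains $f$-nef. So I may form the associated polytope
$$
\square^{(m)} := \Conv(A) + \square_{-K - B - (\chi^{-m})_X - \bD_X}
$$
using the same defining set $A$ of $\bD$.

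Next I will establish the translation identity $\square^{(m)} = \square - m$. The key point is that for any invariant $\R$-Cartier divisor $L$ on $X$,
$$
\square_{L - (\chi^{-m})_X} = \{m' \in M_\R : L + (\chi^{m'+m})_X \ge 0\} = \square_L - m,
$$
using $(\chi^{m'})_X - (\chi^{-m})_X = (\chi^{m'+m})_X$. Taking $L = -K-B-\bD_X$ and Minkowski-summing with $\Conv(A)$ yields $\square^{(m)} = \Conv(A) + (\square_{-K-B-\bD_X} - m) = \square - m$. Then Proposition~\ref{lccr} applied to the perturbed pair gives: $(X, B + (\chi^{-m})_X + \bD_X)$ has g-lc singularities iff $0 \in \square^{(m)} = \square - m$, that is, iff $m \in \square$, which is exactly what the lemma asserts.

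The whole argument is formal, and its only content is the translation identity for $\square_L$ under addition of a principal divisor. There is no substantive obstacle; the one subtle point is to absorb $(\chi^{-m})_X$ into the boundary rather than into $\bD$, so that $A$ (and hence $\Conv(A)$) is unchanged and only $\square_{-K-B-\bD_X}$ gets translated.
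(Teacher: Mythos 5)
Your proof is correct and takes the same route as the paper's: observe that $\square_{-(K+B+(\chi^{-m})_X+\bD_X)}=\square_{-(K+B+\bD_X)}-m$ while $\bD$ and hence $\Conv(A)$ is unchanged, so $\square^{(m)}=\square-m$, and then apply Proposition~\ref{lccr}. The only difference is that you spell out the translation identity, which the paper states without justification.
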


\begin{proof}
	We have $\square_{-(K+B+(\chi^{-m})_X+\bD_X)}=\square_{-(K+B+\bD_X)}-m$.
	Since $\bD$ is the same, we conclude from Proposition~\ref{lccr}.
\end{proof}

\begin{lem}
	Let $\mu\colon X'\to X$ be a toric birational contraction, let
	$\mu^*(K+B+\bD_X)=K_{X'}+B_{X'}+\bD_{X'}$ be the log pullback.
	Then $\square\subseteq \square_{-K_{X'}-B_{X'}}$, and 
	equality holds if $\bD$ descends to $X'$.
\end{lem}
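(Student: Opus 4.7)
The plan is to re-express both convex sets in $M_\R$ as sublevel sets of a single continuous piecewise linear function on $|\Delta|$, after which the lemma reduces to the elementary fact that a linear function on a polyhedral cone is determined by its values on the generating rays.

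First I introduce the support function $\phi\colon|\Delta|\to\R$ of the $\R$-Cartier divisor $K+B+\bD_X$, characterised by $\phi|_\sigma=\langle\psi_\sigma,\cdot\rangle$ for $\sigma\in\Delta(top)$; the compatibility $\psi_\sigma-\psi_{\sigma'}\in(\sigma\cap\sigma')^\perp$ on overlaps makes $\phi$ well-defined and continuous. For $m\in M_\R$ set
$$
f_m(e):=\phi(e)+\langle m,e\rangle-h_A(e),\qquad e\in|\Delta|.
$$
Adding $(\chi^{-m})_X$ to the invariant boundary replaces $\psi_\sigma$ by $\psi_\sigma+m$, so the g-log discrepancy formula from Section 3 gives $a_{E_e}(X,B+(\chi^{-m})_X+\bD_X)=f_m(e)$. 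By Lemma~\ref{sqc1}, $m\in\square$ iff this pair is g-lc, iff $f_m(e)\ge0$ for every primitive $e\in|\Delta|$, iff $f_m\ge0$ on $|\Delta|$.

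Next I would compute the coefficient of each prime divisor $V(e)$, $e\in\Delta_{X'}(1)$, in $-K_{X'}-B_{X'}+(\chi^m)_{X'}$. From $K_{X'}+B_{X'}=\mu^*(K+B+\bD_X)-\bD_{X'}$, the local description $\mu^*(K+B+\bD_X)|_{U_{\sigma'}}=(\chi^{-\psi_\sigma})_{X'}|_{U_{\sigma'}}$ for $\sigma'\subseteq\sigma\in\Delta(top)$, and $\mult_{V(e)}\bD_{X'}=-h_A(e)$, a direct sign-tracking yields
$$
\mult_{V(e)}\bigl(-K_{X'}-B_{X'}+(\chi^m)_{X'}\bigr)=f_m(e),
$$
so $m\in\square_{-K_{X'}-B_{X'}}$ iff $f_m$ is non-negative on the rays of $\Delta_{X'}$. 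The inclusion $\square\subseteq\square_{-K_{X'}-B_{X'}}$ is then immediate, since $\Delta_{X'}(1)\subset|\Delta|$.

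Finally, when $\bD$ descends to $X'$, $h_A$ is linear on each cone of $\Delta_{X'}$. Since $\phi$ is already linear on each cone of $\Delta$ and $\Delta_{X'}$ refines $\Delta$, the function $\phi$, hence also $f_m$, is linear on each cone of $\Delta_{X'}$. A linear function on a polyhedral cone is non-negative iff it is non-negative on any set of generators, so non-negativity of $f_m$ on $\Delta_{X'}(1)$ propagates to all of $|\Delta_{X'}|=|\Delta|$, giving the reverse inclusion. The only step requiring genuine care is the sign-tracking in the multiplicity identity for $f_m$; everything else is elementary piecewise linear convex geometry.
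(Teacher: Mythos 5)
Your proof is correct and takes essentially the same route as the paper: both invoke Lemma~\ref{sqc1} to read $\square$ as an lc-threshold set and then compare with the coefficient condition defining $\square_{-K_{X'}-B_{X'}}$ via the log pullback formula, with the equality in the descending case coming down to piecewise-linearity of the support function on $\Delta_{X'}$. Your write-up merely makes explicit, via the function $f_m$, what the paper compresses into the toric lc criterion $B_{X'}+(\chi^{-m})_{X'}\le\Sigma_{X'}$, and it cleanly separates the inclusion (checked only on rays) from the equality (needing $h_A$ linear on $\Delta_{X'}$).
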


\begin{proof} We may suppose $\bD$ descends to $X'$.
	We have $\mu^*(K+B+(\chi^{-m})+\bD_X)=K_{X'}+B_{X'}+(\chi^{-m})_{X'}+\bD_{X'}$.
	Therefore $K+B+(\chi^{-m})_{X'}+\bD_X$ has g-lc singularities if and only if 
	$K_{X'}+B_{X'}+(\chi^{-m})_{X'}\le K_{X'}+\Sigma_{X'}=0$.
	The latter inequality is equivalent to $(\chi^m)_{X'}-K_{X'}-B_{X'}\ge 0$.
\end{proof}

Consider the polar dual convex polyhedral set 
$$
U=\square^*\subset N_\R.
$$ 
By definition, $U=\{e\in N_\R \ | -h_\square(e)\le 1 \}$.
Therefore $|\Delta|=\cup_{t>0}tU$. Then $\dim U=\dim |\Delta|$, so
$\dim U=\dim N$. We obtain
$$
\Int(tU)=\{e\in \Int |\Delta| \ | -h_\square(e)<t\} \ \forall t>0.
$$

\begin{prop}\label{mc}
	Let $t>0$. Then $\gmld_{f^{-1}(P)}(X,B+\bD_X)\ge t$ if and only if 
	$
	N\cap \Int(tU)\subset \{0\}.
	$
\end{prop}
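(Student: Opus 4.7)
The plan is to translate the inequality $\gmld_{f^{-1}(P)}(X, B + \bD_X) \ge t$ into a statement about the support function $h_\square$, then recognize the resulting condition as a property of the polar dual set $U$. The preceding lemma provides the algebraic bridge; the remaining work is identifying which toric valuations contribute to the g-mld and then dualizing.

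First I would pin down the set of toric valuations centered in $f^{-1}(P)$. The map $f$ corresponds to a surjective lattice projection $\pi\colon N \to \bar N$ with $|\Delta| = \pi^{-1}(\bar\sigma)$, and the existence of an invariant point $P \in Y$ forces $\bar\sigma$ to be full-dimensional in $\bar N_\R$. I would check that $\Int|\Delta| = \pi^{-1}(\Int\bar\sigma)$ and argue that the center of $E_e$ on $Y$ is $P$ precisely when $\pi(e) \in \Int\bar\sigma$, i.e.\ when $e \in \Int|\Delta|$. Combined with the standard toric fact that the g-mld of an invariant g-pair over a torus-invariant closed subset is computed on toric divisorial valuations (justified by passing to a common toric resolution on which $\bD$ descends and invoking piecewise linearity of log discrepancies on the fan), this gives
\[
\gmld_{f^{-1}(P)}(X, B + \bD_X) = \inf \{ a_{E_e}(X, B + \bD_X) : e \in N^{prim} \cap \Int|\Delta| \}.
\]

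Next, the previous lemma yields $a_{E_e}(X, B + \bD_X) = -h_\square(e)$, so $\gmld_{f^{-1}(P)}(X, B+\bD_X) \ge t$ becomes the condition that $-h_\square(e) \ge t$ for every $e \in N^{prim} \cap \Int|\Delta|$. Because $-h_\square$ is positively homogeneous of degree one and $\Int|\Delta|$ is stable under positive scaling, writing any nonzero lattice point in $\Int|\Delta|$ as $k e'$ with $e' \in N^{prim}$ and $k \in \Z_{\ge 1}$ shows that this is equivalent to $-h_\square(e) \ge t$ for every nonzero $e \in N \cap \Int|\Delta|$.

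Finally, I would invoke the identity $\Int(tU) = \{e \in \Int|\Delta| : -h_\square(e) < t\}$ recorded just before the proposition. The condition just obtained is precisely the statement that no nonzero lattice point of $N$ lies in $\Int(tU)$, that is, $N \cap \Int(tU) \subset \{0\}$. The main obstacle will be the reduction from arbitrary geometric valuations to toric ones in the first step: while intuitively clear, the generalized boundary $\bD_X$ requires passing to a resolution on which $\bD$ descends and verifying that the resulting combinatorial formula governs all divisorial valuations, not only the toric ones.
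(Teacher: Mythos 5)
Your argument is correct and follows the same route as the paper: restrict to toric valuations centered over $P$, characterize these as $E_e$ with $e\in N^{prim}\cap\Int|\Delta|$, substitute $a_{E_e}=-h_\square(e)$, and read off the condition $N\cap\Int(tU)\subset\{0\}$ via the identity $\Int(tU)=\{e\in\Int|\Delta|\ :\ -h_\square(e)<t\}$ together with positive homogeneity. The paper compresses the homogeneity step into the passage from $N^{prim}\cap\Int(tU)=\emptyset$ to $N\cap\Int(tU)\subset\{0\}$, and takes the reduction to toric valuations as standard; you make both points explicit, which is fine.
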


\begin{proof} Since $f^{-1}P$ is invariant, $\gmld_{f^{-1}(P)}(X,B+\bD_X)$ is computed
	by toric valuations. Let $e\in N^{prim}\cap |\Delta|$, corresponding to the toric valuation $E_e$
	of $X$. Then $c_X(E_e)\subset f^{-1}P$ if and only if
	$\pi(e)\in \Int\bar{\sigma}$, which is equivalent to $e\in \Int |\Delta|$. We deduce that 
	$\gmld_{f^{-1}(P)}(X,B+\bD_X)\ge t$ if and only if $N^{prim}\cap \Int(tU)=\emptyset$.
	The latter is equivalent to $N\cap \Int(tU)\subset \{0\}$.
\end{proof}

Since $P\in Y$ is an invariant closed point, there are exactly two possibilities for $0\in U$:
\begin{itemize}
	\item[a)] Case $\dim Y=0$. Here $0\in \Int U$.
	Then $N\cap \Int(tU)\subset \{0\}$ if and only if $N\cap \Int(tU)= \{0\}$.
	\item[b)] Case $\dim Y>0$. Here $0\in \partial U$.
	Then $N\cap \Int(tU)\subset \{0\}$ if and only if $N\cap \Int(tU)= \emptyset$.
\end{itemize}

\begin{exmp}
	Suppose $K+B+\bD_X\sim_\R 0$. Choose $\psi\in M_\R$ with $(\chi^\psi)+K+B+\bD_X=0$.
	Then $\square_{-(K+B+\bD_X)}=-\psi+|\Delta|^\vee$, $\square=\Conv(A-\psi)+|\Delta|^\vee$ and 
	$a_{E_e}(X,B+\bD_X)=\langle \psi,e\rangle-h_A(e)$ for all $e\in N^{prim}\cap |\Delta|$.
\end{exmp}

The polyhedral convex set $U$ contains $0$, and can be decomposed as a Minkowski sum of
a polytope and a convex cone $\sigma_0$. We compute
$$
\sigma_0=\{e\in N_\R | h_\square(e) \ge 0 \} \subset |\Delta|.
$$
In particular, $(X,B+\bD_X)$ has g-klt singularities if and only if 
$U$ is compact, if and only if the origin lies in the relative interior of $\square$.
Note that $\gmld_{f^{-1}P}(X,B+\bD_X)>0$ if and only if the origin belongs to $\square$,
but does not belong to any compact face of $\square$.

\begin{exmp}
	$(X,B+\bD_X)$ has log discrepancy zero in all toric valuations if and only if 
	$B=\Sigma_X$ and $\bD=(\chi^m)$ for some $m\in M_\R$. Here $K+B+\bD_X\sim_\R 0$, 
	$\square=|\Delta|^\vee$ and $U=|\Delta|$.
\end{exmp}

\begin{lem}
	Let $\mu\colon X'\to X$ be a toric birational contraction such that $\bD$ descends to $X'$.
	Denote $\Delta_{X'}(1)=\{e_i\}$ and $B_{X'}=\sum_i(1-a_i)V(e_i)$. Then 
	$$
	U=\Conv(\{0\}\cup\{ \frac{e_i}{a_i} | a_i>0\})+\sum_{a_i=0}\R_{\ge 0}e_i.
	$$
	In particular, $\sigma_0\subset N_\R$ is a rational polyhedral cone, and
	the extremal points of $U$ are contained in the set $\{0\}\cup \{ \frac{e_i}{a_i} | a_i>0\}$.
\end{lem}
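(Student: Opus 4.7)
The plan is to use the preceding lemma to identify $\square$ with the polytope $\square_{-K_{X'}-B_{X'}}$ of an $\R$-Cartier toric divisor on $X'$, then compute its polar $U$ directly from the fan $\Delta_{X'}$. Since $-K_{X'}=\Sigma_{X'}=\sum_i V(e_i)$ and $B_{X'}=\sum_i(1-a_i)V(e_i)$, we have $-K_{X'}-B_{X'}=\sum_i a_i V(e_i)$ and
$$
\square=\{m\in M_\R : \langle m,e_i\rangle\ge -a_i \text{ for all }i\}.
$$
The key additional observation is that this divisor is also \emph{nef} on $X'$: writing
$$
-K_{X'}-B_{X'}=\mu^*\bigl(-(K+B+\bD_X)\bigr)+\bD_{X'},
$$
the first summand is the pullback of an $f$-nef divisor with $Y$ affine, while $\bD_{X'}$ is nef because $\bD$ is $\R$-free. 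Translated into toric language, this nefness says that for every top-dimensional cone $\sigma''\in\Delta_{X'}$, the unique $m_{\sigma''}\in M_\R$ with $\langle m_{\sigma''},e_i\rangle=-a_i$ for all $e_i\in\sigma''(1)$ actually lies in $\square$.

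For $\supseteq$, I check each generator of the right-hand side against $U=\{e:\langle m,e\rangle\ge -1\;\forall m\in\square\}$: the origin is trivial; for $a_i>0$, the inequality $\langle m,e_i/a_i\rangle\ge -1$ is the $i$-th defining inequality of $\square$ divided by $a_i$; for $a_j=0$ and $t\ge 0$, $\langle m,te_j\rangle\ge 0\ge -1$. Convexity of $U$ then yields the full Minkowski sum. For $\subseteq$, take $e\in U$, choose a top-dimensional cone $\sigma''\in\Delta_{X'}$ containing $e$, and write $e=\sum_{e_i\in\sigma''(1)}t_i e_i$ with $t_i\ge 0$. Pairing the element $m_{\sigma''}\in\square$ constructed above with $e\in U$ gives
$$
-\sum_i t_i a_i=\langle m_{\sigma''},e\rangle\ge -1,\qquad\text{so}\qquad \sum_i t_i a_i\le 1.
$$
Setting $\alpha:=1-\sum_{a_i>0}t_i a_i\ge 0$, the identity
$$
e=\alpha\cdot 0+\sum_{a_i>0}(t_i a_i)(e_i/a_i)+\sum_{a_j=0}t_j e_j
$$
exhibits $e$ in the required convex-plus-cone form. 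The ``in particular'' part is then immediate: the recession cone of $U$ is $\sigma_0=\sum_{a_i=0}\R_{\ge 0}e_i$, rational polyhedral since the $e_i$ are lattice vectors, and every extremal point of $U$ must appear among the convex-hull generators $\{0\}\cup\{e_i/a_i : a_i>0\}$.

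The main obstacle is establishing $m_{\sigma''}\in\square$: without this, pairing with $e\in U$ gives no useful bound and the $\subseteq$ inclusion collapses. This is where the $\R$-freeness of $\bD$ combined with the $f$-nefness of $-(K+B+\bD_X)$ and the affineness of $Y$ are indispensable, via the standard toric fact that local linearizations of a nef torus-invariant $\R$-Cartier divisor on top-dimensional cones lie in its defining polytope.
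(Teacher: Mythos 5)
Your proof is correct, but it takes a genuinely different route from the paper's. The paper's proof is a one-line polar-duality argument: it observes that the polar dual of the right-hand side is exactly $\cap_i\{m\in M_\R \mid \langle m,e_i\rangle + a_i\ge 0\} = \square_{-K_{X'}-B_{X'}} = \square$ (the last equality being the preceding lemma), and then invokes the bipolar theorem for the closed convex set containing the origin on the right-hand side to conclude $U=\square^*=\text{RHS}$. This entirely sidesteps your ``key obstacle'': the paper never needs to exhibit the cone linearizations $m_{\sigma''}$ explicitly, nor to argue that they lie in $\square$, because the half-space description of $\square$ already encodes this. Your proof instead unpacks the duality into two hand-checked inclusions, and the $\subseteq$ direction does require the extra observation that $-K_{X'}-B_{X'}$ is nef over $Y$ (equivalently $\R$-free), which you correctly derive by decomposing it as $\mu^*(-(K+B+\bD_X))+\bD_{X'}$ and using that both summands are nef over the affine base. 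That decomposition-plus-nefness step is sound (note that an $\R$-free b-divisor has nef trace on \emph{every} model to which it descends, not just one), and it makes visible where the running hypotheses ($f$-nefness of $-(K+B+\bD_X)$, $\R$-freeness of $\bD$, affineness of $Y$) are actually doing work --- information the paper's slicker argument suppresses because it is already baked into the identity $\square=\square_{-K_{X'}-B_{X'}}$. The trade-off is length: the paper's proof is two sentences, yours is a page, and your $\supseteq$ direction is slightly terse (checking that each cone generator $te_j$ lies in $U$ plus convexity does not by itself give the Minkowski sum; you also need that $\langle m,e_j\rangle\ge 0$ for all $m\in\square$ shows the ray cone sits inside the recession cone of $U$, which you clearly have in hand but should spell out). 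Both approaches are valid.
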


\begin{proof}
The polar dual of the right hand side equals $\cap_i \{m\in M_\R | \langle m,e_i\rangle+a_i\ge 0\}$,
which is just $\square_{-K_{X'}-B_{X'}}$. The latter equals $\square$, since $\bD$ descends to 
$X'$. By duality, the right hand side equals $\square^*=U$.
\end{proof}

Is it true that $0$ is an extremal point of $U$ iff $0\prec |\Delta|$?

Denote $l:=\dim N-\dim \sigma_0\ge 0$. Note that $l=0$ if and only if $\gmld_{f^{-1}(P)}(X,B+\bD_X)\le 0$.

\begin{lem}\label{ft} 
	Suppose $l\ge 1$, $t>0$ and $N\cap \Int(tU)=\emptyset$.
Consider the lattice projection $p\colon N\to N'=N/N\cap(\sigma_0-\sigma_0)$.
Let $U' \subset N'_\R$ be the $p$-image of $U\subset N_\R$. Then:
\begin{itemize}
	\item[a)] $N'\cap \Int(tU')=\emptyset$.
	\item[b)] There exists a surjective linear homomorphism $\varphi\colon N\to \Z$ with $\length \varphi(U)\le \frac{l^2}{t}$. 
\end{itemize}
\end{lem}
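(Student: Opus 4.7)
For (a), I would argue by contradiction. Assume $\bar e \in N' \cap \Int(tU')$. Choose any lattice lift $e \in N$ of $\bar e$ (possible since $p$ is surjective on lattices). Because $\sigma_0$ is a rational polyhedral cone spanning $\ker p_\R = (\sigma_0-\sigma_0)_\R$, its relative interior contains a lattice vector $s^* \in N$. Moreover, since the recession cone of $tU$ is $\sigma_0$ and spans $\ker p_\R$, a standard fact for polyhedral convex sets gives $p_\R(\Int(tU)) = \Int(tU')$, so there exists $y \in \Int(tU)$ with $p_\R(y) = \bar e$. For $k$ sufficiently large, $ks^* - (y-e) \in \sigma_0$ (the vector $ks^*$ sits arbitrarily deep in $\relint \sigma_0$ while $y - e \in \ker p_\R$ is bounded), and hence $\tilde e := e + ks^* = y + (ks^* - (y - e)) \in y + \sigma_0 \subseteq \Int(tU) + \sigma_0 = \Int(tU)$, the last equality coming from $tU + \sigma_0 = tU$. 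Then $\tilde e \in N \cap \Int(tU)$, contradicting the hypothesis.

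For (b), a linear $\varphi \colon N \to \Z$ has $\varphi(U)$ of finite length iff $\varphi$ vanishes on $\sigma_0$, i.e., iff it factors through $p$. So it suffices to produce a surjective $\varphi' \colon N' \to \Z$ with $\length \varphi'(U') \le l^2/t$. I would induct on $l = \dim N'$.

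Base case $l = 1$. Case (b) of the setup forces $0 \in \partial U$, and a short argument using part (a) gives $0 \in \partial U'$; thus $U' \subset \R$ is an interval with an endpoint at $0$. The hypothesis $\Z \cap \Int(tU') = \emptyset$ then forces the length of $U'$ to be at most $1/t$, and $\varphi' = \pm \id$ does the job.

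Inductive step $l \ge 2$. I would invoke Proposition \ref{extpofcn} with $\Lambda = N'$, the rational polyhedral cone $\sigma = p_\R(|\Delta|)$ (satisfying $\sigma = \cup_{\tau>0}\tau U'$), and $C = U'$. Choose a surjective $\psi \colon N' \to \Z$ such that $\psi(U') = [-w_-, w_+]$ contains $0$ in its interior, and set $w = w_-+w_+$. The slice $C_0 = U' \cap \psi^\perp \subset \Lambda_0 = N' \cap \psi^\perp$ satisfies $\relint(tC_0) \subseteq \Int(tU')$ (since $U'$ extends on both sides of $\psi^\perp$), so by part (a), $\Lambda_0 \cap \relint(tC_0) = \emptyset$. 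The inductive hypothesis on $\Lambda_0$ then produces $\varphi_0 \colon \Lambda_0 \to \Z$ surjective with $\length \varphi_0(C_0) \le (l-1)^2/t$, and Proposition \ref{extpofcn} yields $\varphi' \colon N' \to \Z$ with $\length \varphi'(U') \le w \cdot (l-1)^2/t$. The main obstacle is selecting $\psi$ so that $w \le (l/(l-1))^2$, which is precisely what closes the induction to $l^2/t$ (note $\prod_{i=2}^l (i/(i-1))^2 = l^2$). I expect this selection to use both the absence of interior lattice points in $tU'$ and the polyhedral structure of $U'$ as a cross-section of $p_\R(|\Delta|)$, perhaps by building $\varphi'$ one rank at a time with each extension direction chosen so its width on the running slice is at most $(i/(i-1))^2$, so that the telescoping product produces the factor $l^2$.
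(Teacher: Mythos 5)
Your proof of part (a) is correct and uses the same idea as the paper: lift an interior point of $tU'$ to $N_\R$, use that the recession cone $\sigma_0$ is rational and spans $\ker p_\R$, and translate by a suitable lattice vector in $\sigma_0$ to produce a point of $N \cap \Int(tU)$ (the equality $tU + \sigma_0 = tU$ makes this work). The paper carries out this shift explicitly by writing $v_2 = \sum_i t_i e_i$ over generators $e_i$ of $\sigma_0$ and adding $\sum_i \lceil t_i \rceil e_i$; your version with a single deep vector $s^*\in\relint\sigma_0$ scaled by a large $k$ is an equally valid, marginally cleaner packaging. Either way (a) is fine.

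Part (b) is where the proposal goes off course and has a genuine gap. The paper's entire proof of (b) consists of invoking an external lattice-width (flatness) theorem: by part (a), $tU'$ has no interior lattice points in the $l$-dimensional lattice $N'$, and \cite[Theorem~5.2]{AI20} then gives $w(N',tU')\le l^2$, i.e.\ a surjective $\varphi'\colon N'\to\Z$ with $\length\varphi'(tU')\le l^2$; compose with $p$. You instead attempt to re-derive this flatness bound internally, by induction on $l$ using Proposition~\ref{extpofcn}. The bottleneck you yourself flag --- finding $\psi$ with $\length\psi(tU')\le (l/(l-1))^2$ so the telescoping factors multiply to $l^2$ --- is not a detail you can expect to fill in: the lattice-width bound you are trying to prove only gives a direction of width at most $l^2$ for $tU'$, which is far larger than the constant $(l/(l-1))^2<4$ your induction requires, so there is no reason such a $\psi$ should exist. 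There are further obstructions: Proposition~\ref{extpofcn} requires $\varphi_0(C_0)=[0,l_0]$, i.e.\ $\varphi_0\ge 0$ on $C_0$, which the inductive output (a small-width direction, with no sign control) does not supply; and the inductive hypothesis as stated is about the specific $U$ coming from a toric anti-minimal model, while the slice $C_0=U'\cap\psi^\perp$ is a general convex body, so you would really need to prove a stand-alone flatness theorem, which is precisely the content of the citation. Finally, note that Proposition~\ref{extpofcn} is used in the paper for a different purpose: in Lemma~\ref{exthyp}, to extend a linear form already produced on the slice $N_0$ back to $N$ with controlled loss, not to bootstrap the initial width bound. So for (b) you should simply invoke \cite[Theorem~5.2]{AI20} on $(N',tU')$ after part (a), as the paper does.
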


\begin{proof}
	a) Suppose $e'\in N'\cap \Int(tU')$. Then $e'=pe=pv$, where $e\in N$ and $v\in \Int(tU)$.
	Then $e-v=v_1-v_2$ for some $v_1,v_2\in \sigma_0$. We obtain $e=v+v_1-v_2$.
	Let $\sigma_0=\R_+e_i$ with $e_i\in N$. We can write $v_2=\sum_it_ie_i$ with $t_i\ge 0$ for all $i$.
	Then $e+\sum_i\lceil t_i\rceil e_i=v+v_1+\sum_i(\lceil t_i\rceil-t_i)e_i$. The left hand side belongs to
	$N$ and the right hand side belongs to $\Int(tU)$. Then $N\cap \Int(tU)\ne \emptyset$,
	a contradiction!
	
	b) Note that $l=\dim N'$. From a) and~\cite[Theorem 5.2]{AI20}, $w(N',tU')\le l^2$. 
	That is, there exists a surjective linear homomorphism $\varphi' \colon N' \to \Z$ with 
	$\length \varphi'(U')\le \frac{l^2}{t}$. Set $\varphi=\varphi'\circ p\colon N\to \Z$.
	Then $\varphi\colon N\to \Z$ is surjective and $\varphi(U)=\varphi'(U')$. 
\end{proof}


\section{A construction}


Let $f\colon (X,B+\bD_X)\to Y\ni P$ be an anti-minimal model as in Section 4.
Suppose moreover that $\dim Y>0$ and $\gmld_{f^{-1}(P)}(X,B+\bD_X)\ge t>0$. In particular, 
$(X,B+\bD_X)$ has g-lc singularities.

Let $\varphi\colon N\to \Z$ a surjective linear homomorphism such that $\varphi(U)\subset \R$
is compact. Then $\varphi(U)$ is an interval of finite length $w>0$. Since $U$ contains the origin,
so does $\varphi(U)$. Modulo replacing $\varphi$ by $-\varphi$, we have exactly two 
cases: 1) $\varphi(U)=[0,w]$ for some $w>0$, or 2) $\varphi(U)=[-a,b]$,  with $a,b>0$ and 
$a+b=w$.

{\em Case 1):} The condition $\varphi(U)=[0,w]$ is dual equivalent to 
$$
\{x\in \R\  | x\varphi\in \square \}=[-\frac{1}{w},+\infty).
$$
We have $\varphi\in |\Delta|^\vee$.
Therefore there exists $\bar{\varphi}\in \bar{M}^{prim}\cap {\bar{\sigma}}^\vee$ such that
$\pi^*(\bar{\varphi}) =\varphi$. Then $H=(\chi^{\bar{\varphi}})_Y$ is an invariant hyperplane section 
of $P\in Y$ such that $f^*H=(\chi^{\varphi})_X$. Therefore
$$
(X,B+\frac{1}{w} f^*H+\bD_X)
$$ 
has g-lc singularities (and $\frac{1}{w}$ is maximal with this property).

{\em Case 2):} The condition $\varphi_1(U)=[-a,b]$ is dual equivalent to 
$$
\{x\in \R \ | x\varphi \in \square \}=[-\frac{1}{b},\frac{1}{a}].
$$
In particular, $\pm \frac{1}{w}\varphi \in \square$, which implies
$
-h_\square(e)\ge \frac{|\langle \varphi ,e\rangle |}{w}
$
for all $e\in |\Delta|$.
It follows that every $e\in N^{prim}\cap |\Delta|$ with $a_{E_e}(X,B+\bD_X)<\frac{1}{w}$
must satisfy $\varphi(e)=0$.

Let $M_0=M/\Z\varphi$ be the quotient lattice, dual to $N_0=N\cap \varphi^\perp$.
Let $\square_0$ be the image of $\square$ under the projection 
$M_\R\to M_{0,\R}$. The polar dual convex set $U_0=(\square_0)^*\subset N_{0,\R}$ equals $U\cap \varphi^\perp$.
The assumption $\gmld_{f^{-1}(P)}(X,B+\bD_X)\ge t$ is equivalent to 
$N\cap \Int(tU)=\emptyset$. We have $\relint(tU_0)\subset \Int(tU)$, since 
$\varphi(U)$ contains $0$ in its interior. We deduce
$$
N_0 \cap \Int(t U_0)=\emptyset.
$$

The projection $\varphi\colon N\to \Z$ corresponds to a fibration of tori
$T_N\to T_\Z$. Identifying $T_\Z$ with the standard open chart $\{z_0=1\}\subset \bP^1$,
we obtain a toric rational fibration 
$$
\phi \colon X\dashrightarrow  \bP^1, \ \phi (x)=[1:\chi^\varphi(x)].
$$ 
It coincides with the map induced by the invariant linear
system $\Lambda$ defined by the finite set $\{0,\varphi\}\subset M$. The support function
$h\colon N_\R\ni e\mapsto \min(0,\langle \varphi,e\rangle)\in \R$ is $N$-rational
and piecewise linear.  The rational map $\phi$ extends to a morphism from $X$ 
if and only if the functional 
$$
|\Delta|\ni e\mapsto \min(0, \langle \varphi,e\rangle)
$$
is linear on every cone of $\Delta$. 
Let $\Delta'$ be the collection of cones 
$\sigma\cap \varphi^{\ge 0}, \sigma\cap \varphi^{\le 0}, \sigma\cap \varphi^\perp$, after all $\sigma\in \Delta$.
Then $\Delta'$ is an $N$-rational fan, a subdivision of $\Delta$. It induces a toric birational contraction 
$\mu\colon X'=T_N\emb(\Delta') \to X$ which resolves  $\phi$ to a toric morphism:
\[ \xymatrix{
	& X' \ar[dl]_\mu \ar[dr]^{\phi'} & \\
	X\ar@{-->}[rr]^\phi  & & \bP^1
} \]
Note that $\phi'(X')=\bP^1$, since $\varphi(N)=\Z$ and $|\Delta|^\vee$
does not contain $\varphi$ or $-\varphi$. 
Let 
$$
\mu^*(K_X+B+\bD_X)=K_{X'}+B_{X'}+\bD_{X'}
$$ 
be the log pullback. The invariant $\R$-divisor
$B_{X'}$ satisfies $B_{X'} \le \Sigma_{X'}$, but it may not be effective in $\mu$-exceptional divisors.
We bound from below the negative coefficients of $B_{X'}$.

\begin{lem}\label{1r}
	The one dimensional cones of $\Delta'$ are the one dimensional cones of $\Delta$,
	plus the rays 
	$
	\R_+(\varphi(e_2) e_1-\varphi(e_1) e_2 ),
	$
	after all pairs $e_1,e_2$ which generate a two-dimensional cone in $\Delta$ and satisfy
	$ \varphi(e_1)<0< \varphi(e_2)$.
\end{lem}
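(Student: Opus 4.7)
The plan is to use elementary polyhedral geometry, noting that $\Delta'$ is the refinement of $\Delta$ obtained by cutting each cone along the hyperplane $\varphi^\perp$. First I would observe that cutting a cone along a hyperplane never destroys an existing ray, so $\Delta(1) \subseteq \Delta'(1)$; and any new one-dimensional cone $\tau \in \Delta'(1) \setminus \Delta(1)$ must lie in $\varphi^\perp$, since away from $\varphi^\perp$ the fan $\Delta'$ agrees locally with $\Delta$.

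Next I would identify each new ray $\tau$ as a one-dimensional face of $\sigma \cap \varphi^\perp$ for some $\sigma \in \Delta$. The key polyhedral fact I would invoke is that if $\tau$ is a one-dimensional face of $\sigma \cap \varphi^\perp$ which is not already a face of $\sigma$, then $\tau$ is cut out from a two-dimensional face $\tau' \prec \sigma$ by the hyperplane $\varphi^\perp$, with $\tau'$ not entirely contained in $\varphi^{\ge 0}$ or $\varphi^{\le 0}$. Since $\Delta$ is a fan, such $\tau'$ is itself a two-dimensional cone of $\Delta$, generated by primitive lattice vectors $e_1,e_2$. The condition that $\tau' \cap \varphi^\perp$ is one-dimensional (as opposed to being $\{0\}$ or $\tau'$ itself) is precisely $\varphi(e_1)<0<\varphi(e_2)$ after relabeling. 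A direct calculation then shows that $\tau = \tau' \cap \varphi^\perp$ is generated by $\varphi(e_2)e_1 - \varphi(e_1)e_2$, which indeed lies in $\tau'$ (both coefficients are positive) and is killed by $\varphi$.

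Finally, for the converse, I would check that each such pair $(e_1,e_2)$ with $\varphi(e_1)<0<\varphi(e_2)$ spanning a two-cone of $\Delta$ does contribute its ray to $\Delta'(1)$: the cone $\tau' \cap \varphi^\perp$ is visibly a face of $\tau' \cap \varphi^{\ge 0}$ and $\tau' \cap \varphi^{\le 0}$, which both belong to $\Delta'$.

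I do not expect a serious obstacle here; the only care needed is the step where one argues that new one-dimensional faces of $\sigma \cap \varphi^\perp$ come from two-dimensional faces of $\sigma$, which is a standard statement about slicing polyhedral cones by a hyperplane and holds regardless of $\dim \sigma$.
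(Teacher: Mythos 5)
Your proof is correct, and it takes a genuinely different route from the paper's. The paper works dually: for a top cone $\sigma\in\Delta$ it computes $(\sigma\cap\varphi^{\ge 0})^\vee=\sigma^\vee+\R_+\varphi$, then enumerates extremal rays by solving the systems $\{g_1=\cdots=g_{d-1}=0\}$ (all $g_j\in\sigma^\vee(1)$) and $\{g_1=\cdots=g_{d-2}=\varphi=0\}$, invoking the proof of Farkas's theorem to show these exhaust all extremal rays. You instead argue primally, by classifying the one-dimensional faces of the hyperplane slice: a new ray must lie in $\varphi^\perp$, and a one-dimensional face of $\sigma\cap\varphi^\perp$ that is not already a face of $\sigma$ must arise as the slice of a two-dimensional face of $\sigma$ by $\varphi^\perp$. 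Your approach has the advantage of being dimension-free and not requiring $\sigma$ to be top-dimensional, and it makes the geometric picture (slicing a cone with a hyperplane) transparent without duality; the paper's approach is more explicitly computational and ties directly to the description of $\sigma^\vee(1)$. One small point of care in your write-up: you should state the key polyhedral fact a bit more precisely as ``the minimal face $F\prec\sigma$ containing a relative interior point of $\tau$ satisfies $\tau=F\cap\varphi^\perp$ and $\dim F\le 2$'' (equality $\dim F=2$ when $\tau\notin\Delta(1)$), and observe that $F$ is generated by two rays of $\Delta$ because $\Delta$ is a fan; this is the step you flag at the end, and it does go through exactly as you say.
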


\begin{proof}
	Indeed, suffices to determine the extremal rays of $\sigma\cap \varphi^{\ge 0}$, for 
	a top cone $\sigma\in \Delta$. By duality, we have $(\sigma\cap \varphi^{\ge 0})^\vee=
	\sigma^\vee+\R_+\varphi$. Let $\sigma^\vee(1)=\{g_j;j\}$ and $d=\dim N$.
	Suppose $g_1,\ldots,g_{d-1}$ are linearly independent elements of $\sigma^\vee(1)$.
	The space of solutions $\{g_1=\cdots=g_{d-1}=0\}$ is a one-dimensional vector space.
	If one generator is contained in $\sigma\cap \varphi^{\ge 0}$, we obtain an extremal ray of 
	$\sigma$ which is also an extremal ray of $\sigma\cap \varphi^{\ge 0}$.
	Suppose now $g_1,\ldots,g_{d-2}$ are elements of $\sigma^\vee(1)$ such that
	$g_1,\ldots,g_{d-2},\varphi$ are linearly independent. Let $v$ be a generator of the 
	one dimensional vector space of solutions $\{g_1=\cdots=g_{d-2}=\varphi=0\}$. 
	Then $v$ belongs to $\sigma\cap\varphi^{\ge 0}$ if and only if $v$ belongs to 
	$\tau=\sigma\cap\{g_1=\cdots=g_{d-2}=0\}$, a face of $\sigma$ of dimension at most two.
	If $\dim \tau=1$, then $\R_+ v$ is an extremal ray of $\sigma$ contained in $\varphi^\perp$.
	If $\dim \tau=2$, then we may order $\tau(1)=\{e_1,e_2\}$ such that 
	$\varphi(e_1)<0<\varphi(e_2)$.
	Since $\varphi(e_2) e_1-\varphi(e_1) e_2 \in \relint \tau\cap \varphi^\perp$,
	we obtain $\R_+v=\R_+( \varphi(e_2) e_1- \varphi(e_1) e_2 )$.
	By the proof of Farkas's Theorem, all extremal rays of $\sigma\cap \varphi^{\ge 0}$ are obtained this way.
	The claim is proved.
\end{proof}

For simplicity, denote by $a_e$ the g-log discrepancy
of $(X,B+\bD_X)$ in the toric valuations $E_e$ of $X$ corresponding to
$e\in N^{prim}\cap |\Delta|$. 

\begin{lem}\label{1w}
	$\max(a,b)\ge 1$. In particular, $w>1$.
\end{lem}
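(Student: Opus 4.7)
The plan is to produce a ray $e_+ \in \Delta(1)$ with $\varphi(e_+) > 0$ and with log discrepancy $a_{e_+} \le 1$; this alone yields $b \ge 1$.

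To find such $e_+$: since $\varphi(U) = [-a, b]$ with $b > 0$, the functional $\varphi$ takes positive values on $U \subset |\Delta|$, and because $|\Delta|$ is covered by its top cones, each generated by rays in $\Delta(1)$, some $e_+ \in \Delta(1)$ must satisfy $\varphi(e_+) > 0$. To control $a_{e_+}$, I would apply the generalized boundary formula from Section 3: for any $e \in \sigma(1) \subset \Delta(1)$, the contributions of $\bD$ and of the general mobile sections cancel along rays of the original fan, giving
$$
a_{E_e}(X, B+\bD_X) = 1 - \mult_{V(e)} B^{inv}.
$$
The g-lc hypothesis yields $a_{e_+} \ge 0$, and effectivity of $B$ (inherited from the ambient g-log variety structure in which the construction is eventually invoked) yields $a_{e_+} \le 1$. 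Moreover $a_{e_+} > 0$: otherwise $\R_{\ge 0} e_+$ would sit in the recession cone $\sigma_0$ of $U$, but compactness of $\varphi(U)$ forces $\sigma_0 \subset \varphi^\perp$, contradicting $\varphi(e_+) > 0$.

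The conclusion then follows: $-h_\square(e_+/a_{e_+}) = 1$ places $e_+/a_{e_+}$ on $\partial U$, so $\varphi(e_+)/a_{e_+} \le b$. Since $\varphi \in M$ and $e_+ \in N$, the integer $\varphi(e_+)$ is at least $1$; together with $a_{e_+} \le 1$ this gives $b \ge 1$. Thus $\max(a, b) \ge 1$ and $w = a + b > 1$ (using $a > 0$). The main obstacle I anticipate is the upper bound $a_{e_+} \le 1$: this is genuinely where effectivity of $B$ enters, since otherwise one can make $\mult_{V(e_+)} B^{inv}$ arbitrarily negative, pushing $a_{e_+}$ and therefore the resulting lower bound on $b$ off to infinity.
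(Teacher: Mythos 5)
Your proof is correct and follows essentially the same route as the paper's: pick a ray $e \in \Delta(1)$ with $\varphi(e) \ne 0$, observe $0 < a_e \le 1$ (positivity via $\sigma_0 \subset \varphi^\perp$, the upper bound via effectivity of $B$), so $e/a_e \in U$ and the integer $\varphi(e)$ is pinned between $\pm\max(a,b)$. The only variation is that you locate a ray with $\varphi(e_+) > 0$ directly from $b > 0$ rather than, as the paper does, taking any ray with $\varphi(e_j) \ne 0$ and then branching on the sign of $\varphi(e_j)$; as a by-product your argument actually yields $b \ge 1$ and, by symmetry in $a > 0$, also $a \ge 1$, hence $w \ge 2$ — a slightly stronger conclusion than the stated $w > 1$, though the rest of the paper only needs $w > 1$.
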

	
	\begin{proof}
		Since $\varphi(U)$ is compact and $U$ is the Minkowski sum of a 
		polytope and the cone $\sigma_0$, it follows that $\varphi$ is zero on $\sigma_0$.
		Since $\varphi\ne 0$ and $\Delta(1)$ generates $N_\R$, there exists 
		$e_j\in \Delta(1)$ such that $\varphi(e_j)\ne 0$. Suppose by contradiction that 
		$a_{e_j}=0$. Then $e_j\in \sigma_0$, hence $\varphi(e_j)=0$, a contradiction.
		We obtain $a_{e_j}>0$. Then $\frac{e_j}{a_{e_j}}\in U$. Therefore 
		$$
		-aa_{e_j}\le \varphi (e_j)\le ba_{e_j}.
		$$
		The integer $\varphi(e_j)$ is non-zero. If positive, we obtain $1\le ba_{e_j}\le b$.
		If negative, we obtain $1\le aa_{e_j}\le a$. 
	\end{proof}
	
\begin{lem}\label{bnda}
	$a_E(X,B+\bD_X)\le w$ for every invariant prime divisor $E$ on $X'$.
\end{lem}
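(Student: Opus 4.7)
The approach is to analyze the two types of rays appearing in $\Delta'(1)$ separately, according to Lemma~\ref{1r}: either $e\in\Delta(1)$ (an old ray already present on $X$), or $e$ is the primitive generator of a ray $\R_{+}(\varphi(e_2)e_1-\varphi(e_1)e_2)$ where $e_1,e_2\in\Delta(1)$ span a two-dimensional cone of $\Delta$ with $\varphi(e_1)<0<\varphi(e_2)$ (a new ray). In each case I want to bound $a_e:=a_{E_e}(X,B+\bD_X)=-h_\square(e)$ by $w$.

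For an old ray $e\in\Delta(1)$, the divisor $E_e$ lies already on $X$, and the formula derived in the last subsection of Section 3 (for the $\R$-Cartier condition) reads $a_e=1-\mult_{V(e)}B^{inv}\le 1$, using that $B^{inv}$ is effective. Combined with $w>1$ from Lemma~\ref{1w}, this yields $a_e\le 1<w$.

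For a new ray, write $\varphi(e_2)e_1-\varphi(e_1)e_2=c\cdot e$ with $c\in\Z_{\ge 1}$. The function $-h_\square$ is finite on $|\Delta|$ and sublinear there (positively homogeneous and convex, since $h_\square$ is the infimum of linear functionals on $\square$), so positive homogeneity together with subadditivity gives
$$
c\cdot a_e = -h_\square(ce) \le \varphi(e_2)\,a_{e_1}+(-\varphi(e_1))\,a_{e_2}.
$$
The hypothesis $\varphi(U)=[-a,b]$, applied to $e_i/a_{e_i}\in U$, translates to $\varphi(e_2)\le b\,a_{e_2}$ and $-\varphi(e_1)\le a\,a_{e_1}$. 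Substituting and invoking the old-ray bounds $a_{e_i}\le 1$ and $c\ge 1$ yields $c\cdot a_e\le (a+b)\,a_{e_1}a_{e_2}=w\,a_{e_1}a_{e_2}\le w$, hence $a_e\le w$.

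The main point to verify carefully is the positivity of $a_{e_1},a_{e_2}$, which is what makes $e_i/a_{e_i}\in U$ meaningful. This follows from the observation in the proof of Lemma~\ref{1w} that the recession cone $\sigma_0$ of $U$ is contained in $\varphi^\perp$ (because $\varphi(U)$ is compact), so $\varphi(e_i)\ne 0$ forces $e_i\notin\sigma_0$, i.e.\ $a_{e_i}>0$. Once this is in hand the estimate is essentially forced by sublinearity of $-h_\square$ plus the length bound $\varphi(U)\subset[-a,b]$.
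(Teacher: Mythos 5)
Your proof is correct and follows essentially the same route as the paper's: split $\Delta'(1)$ into old and new rays via Lemma~\ref{1r}, bound old rays by $a_e\le 1<w$ using effectivity of $B$ and Lemma~\ref{1w}, and bound new rays by combining sublinearity of $-h_\square$ with the estimates $\varphi(e_2)\le b\,a_{e_2}$, $-\varphi(e_1)\le a\,a_{e_1}$, and $a_{e_i}\le 1$. Your extra remark that $a_{e_i}>0$ (because $\sigma_0\subset\varphi^\perp$ forces $\varphi(e_i)=0$ otherwise) is a small but valid clarification of a step the paper leaves implicit.
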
 

\begin{proof} 
	
	Suppose $e\in \Delta(1)$. Then $a_e\le 1$, since $B\ge 0$. By Lemma~\ref{1w}, $a_e<w$.
		
	Suppose $e'\in \Delta'(1)\setminus \Delta(1)$. Since $\varphi(N)\subset \Z$,
	there exists by Lemma~\ref{1r} $q=q(e')\in \Z_{\ge 1}$ such that 
	$$
	e'=\frac{ \varphi(e_2) e_1-\varphi(e_1) e_2}{q},
	$$
	where $e_1,e_2\in \Delta(1)$ are the generators of a $2$-dimensional cone in $\Delta$ and 
	$\varphi(e_1)<0< \varphi(e_2)$.
	Since $a_{E_e}(X,B+\bD_X)=-h_\square(e)$ and $h_\square(v+v')\ge h_\square(v)+h_\square(v')$, we have 
	$$
	a_{e'}\le \frac{\varphi(e_2)}{q}a_{e_1}+\frac{-\varphi(e_1)}{q}a_{e_2}.
	$$
	From $-\frac{\varphi}{b}\in \square$, that is $\max \varphi(U)\le b$,
	we obtain $ \varphi(e_2) \le ba_{e_2}$. From $\frac{\varphi}{a}\in \square$, that is 
	$\max -\varphi(U)\le a$, we obtain $ -\varphi(e_1) \le aa_{e_1}$. Therefore
	$$
	a_{e'}\le \frac{wa_{e_1}a_{e_2}}{q}
	$$
	Since $B$ is effective and $e_i\in \Delta(1)$, we have $a_{e_i}\le 1$ for $i=1,2$.
	Therefore $a_{e'}\le w$.
\end{proof}

Next, we show that $(X',B_{X'}+\bD_{X'})\to Y\ni P$ induces a similar structure on 
the general fiber of $\phi'$. Denote $F={\phi'}^{-1}(1)$.
Denote $X'_1=T_{N_0}\emb(\Delta_0)$, where $N_0$ is the kernel of $\varphi\colon N\to \Z$
and 
$$
\Delta_0=\{\sigma'  | \sigma'\in \Delta', \sigma'\subset N_{0,\R} \}=\{\sigma\cap \varphi^\perp  | \sigma\in \Delta \}.
$$
Note that $|\Delta_0|=|\Delta|\cap \varphi^\perp$.
The natural toric morphism $j\colon X'_1 \to X'$ induces an isomorphism $X'_1 \isoto F$. In fact,
a splitting of the projection $\varphi\colon N\to \Z$ induces a (non-canonical) toric isomorphism
\[ \xymatrix{
	X'_1 \times T \ar[dr]_{\text{pr}_2}   \ar[rr]^\simeq &   &   {\phi'}^{-1}(T)  \ar[dl]^{\phi'}  \\
	 & T & 
} \]
where $T=T_\Z$ is the open dense torus inside $\bP^1$. In particular, 
the toric valuations of ${\phi'}^{-1}(T)$ identify with the toric valuations 
of $X'_1$, and invariant divisors on ${\phi'}^{-1}(T)$ identify with the preimage
under the first projection of invariant divisors on $X_0$. Therefore 
$(K_{X'}+B_{X'}+\bD_{X'})|_{  {\phi'}^{-1}(T) }$ identifies with 
the product of g-log canonical classes
$$
(K_{X'_1}+B_{X'_1}+\bD'_{X'_1}) \boxtimes K_T,
$$
where $K_T\sim 0$, $B_{X'_1}$ is invariant, and $\bD'$ is the $\R$-free toric b-divisor of $X'_1$ induced by $A_0 \subset M_{0,\R}$,
the image of $A\subset M_\R$ under the projection $M\to M_0$.
Since $-(K+B+\bD_X)$ is $\R$-free, so is $-(K_{X'_1}+B_{X'_1}+\bD'_{X'_1})$.
If $B_{X'}$ is effective, then $B_{X'_1}$ is effective.
For every $e\in N_0^{prim}\cap |\Delta_0|=N^{prim}\cap |\Delta|\cap \varphi^\perp$, we have 
$$
a_{E_e}(X,B+\bD_X)=-h_\square(e)=-h_{\square_0}(e)=a_{E_e}(X'_1,B_{X'_1}+\bD'_{X'_1}).
$$
Therefore $\square(K_{X'_1}+B_{X'_1}+\bD'_{X'_1})=\square_0$
and $U(K_{X'_1}+B_{X'_1}+\bD'_{X'_1})=U_0$.

Denote $\bar{N}_0=\pi(N_0)\subset \bar{N}$, $\bar{\sigma}_0=\bar{N}_{0,\R}\cap \bar{\sigma}$
and $Y_1=T_{\bar{N}_0}\emb(\bar{\sigma}_0)$. We have a commutative diagram of toric morphisms
\[ \xymatrix{
	X'_1  \ar[d]_{f'_1}   \ar[r]^j   &   X'  \ar[d]^{f'}  \\
	Y_1 \ar[r]^u   & Y
} \]
where $f'=f\circ \mu\colon X'\to Y$, $f'_1$ is the contraction induced by $N_0\to \pi(N_0)$, 
and $u$ is finite over its image induced by $\pi(N_0)\subset \bar{N}$. Therefore 
$f'_1$ is isomorphic to the contraction component of the Stein factorization of 
$F\to f'(F)$. 

\begin{lem}
	$Y_1$ contains an invariant point $P_1$, and $u(P_1)=P$.
\end{lem}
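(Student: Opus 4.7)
The statement has two parts: (a) the cone $\bar{\sigma}_0$ is top-dimensional in $\bar{N}_{0,\R}$, so that $Y_1=T_{\bar{N}_0}\emb(\bar{\sigma}_0)$ contains an invariant closed point $P_1$; and (b) $u(P_1)=P$. Both parts will follow from producing a single point $p\in \bar{N}_{0,\R}\cap \Int\bar{\sigma}$.

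To produce such a $p$, I first find $v\in N_{0,\R}\cap \Int|\Delta|$ and then set $p=\pi(v)$. We are in Case 2, where $\varphi(U)=[-a,b]$ with $a,b>0$; since $|\Delta|=\cup_{t>0}tU$, the linear form $\varphi$ takes both strictly positive and strictly negative values on $|\Delta|$. The hypothesis that $Y$ carries an invariant point gives $\dim \bar{\sigma}=\dim\bar{N}$, hence $\dim|\Delta|=\dim N$, so $\Int|\Delta|$ is nonempty and dense in $|\Delta|$; by continuity $\varphi$ attains both signs on $\Int|\Delta|$. A line segment joining a point where $\varphi>0$ to a point where $\varphi<0$ lies in the convex open set $\Int|\Delta|$, and by the intermediate value theorem crosses $\varphi^\perp=N_{0,\R}$ at a point $v\in \Int|\Delta|\cap N_{0,\R}$.

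Now set $p=\pi(v)\in \bar{N}_{0,\R}$. I claim $p\in \Int\bar{\sigma}$, via the identity $\pi^{-1}(\Int\bar{\sigma})=\Int|\Delta|$: the inclusion $\subseteq$ is immediate since $\pi$ is continuous and $\pi^{-1}(\bar{\sigma})=|\Delta|$; for the reverse, any $w\in \Int|\Delta|$ with $\pi(w)$ on a proper face $\tau\prec\bar{\sigma}$ would lie in $\pi^{-1}(\tau)$, a set of dimension $\dim\tau+\dim\ker\pi<\dim N$, impossible for a topologically interior point. So $p\in \bar{N}_{0,\R}\cap \Int\bar{\sigma}\subset \bar{\sigma}_0$. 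A small open ball in $\bar{N}_\R$ around $p$ lies in $\bar{\sigma}$, and its intersection with $\bar{N}_{0,\R}$ is an open ball in $\bar{N}_{0,\R}$ contained in $\bar{\sigma}_0$; therefore $\bar{\sigma}_0$ is top-dimensional in $\bar{N}_{0,\R}$ and $p\in \relint\bar{\sigma}_0$. This yields the invariant point $P_1\in Y_1$ corresponding to the maximal cone $\bar{\sigma}_0$.

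For part (b), the toric morphism $u\colon Y_1\to Y$ associated to the lattice inclusion $\bar{N}_0\subset\bar{N}$ and the cone inclusion $\bar{\sigma}_0\subset\bar{\sigma}$ sends the closed orbit $O_{\bar{\sigma}_0}=\{P_1\}$ into the orbit $O_\tau$, where $\tau$ is the smallest face of $\bar{\sigma}$ containing $\relint\bar{\sigma}_0$. Since $p\in \relint\bar{\sigma}_0\cap \Int\bar{\sigma}$, no proper face of $\bar{\sigma}$ contains $\bar{\sigma}_0$, so $\tau=\bar{\sigma}$ and $u(P_1)=O_{\bar{\sigma}}=P$. The only mildly delicate step is the dimension argument identifying $\pi^{-1}(\Int\bar{\sigma})$ with $\Int|\Delta|$; everything else is a routine convexity argument from Case 2.
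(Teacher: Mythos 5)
Your argument is essentially the paper's, phrased in terms of an explicit interior point rather than a span computation: the paper deduces $\bar{\sigma}_0-\bar{\sigma}_0=\bar{N}_{0,\mathbb{R}}$ directly from $|\Delta|\cap\varphi^\perp-|\Delta|\cap\varphi^\perp=\varphi^\perp$, while you produce $v\in\Int|\Delta|\cap\varphi^\perp$ and push it forward; both hinge on the same fact that $0\in\Int\varphi(U)$ forces $\varphi$ to change sign on $\Int|\Delta|$. Your treatment of $u(P_1)=P$ via the orbit--cone correspondence is more explicit than the paper's one-line ``$u$ is toric, hence sends an invariant point to an invariant point,'' and it is correct.

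One step, however, is not justified as written. To show $\Int|\Delta|\subseteq\pi^{-1}(\Int\bar{\sigma})$ (the only inclusion you actually use), you argue that if $\pi(w)$ lay on a proper face $\tau\prec\bar{\sigma}$ then $w\in\pi^{-1}(\tau)$, ``a set of dimension $\dim\tau+\dim\ker\pi<\dim N$, impossible for a topologically interior point.'' This dimension count does not by itself yield a contradiction: an interior point of a full-dimensional cone can perfectly well lie on a lower-dimensional subset (any line through the interior, say). What actually rescues the argument is that $\pi^{-1}(\tau)$ is a \emph{face} of $|\Delta|$ --- write $\tau=\bar{\sigma}\cap\bar{m}^\perp$ with $\bar{m}\in\bar{\sigma}^\vee$, so $\pi^{-1}(\tau)=|\Delta|\cap(\pi^*\bar{m})^\perp$ with $\pi^*\bar{m}\in|\Delta|^\vee$ --- and a proper face of a full-dimensional cone is disjoint from its interior. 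Simpler still: $\pi$ is a surjective linear map, hence open, so $\pi(\Int|\Delta|)$ is an open subset of $\bar{\sigma}$ and therefore lies in $\Int\bar{\sigma}$. Either repair closes the gap; the rest of the proof stands.
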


\begin{proof}
	Since $0\in \Int \varphi(U)$, $|\Delta|^\vee$ does not contain $\varphi$ or $-\varphi$.
	Therefore 
	$$
	|\Delta|\cap\varphi^\perp-|\Delta|\cap \varphi^\perp=\varphi^\perp=N_{0,\R}.
	$$
	Since $|\Delta|=\pi^{-1}(\bar{\sigma})$, we have $\bar{\sigma}_0=\pi(|\Delta|\cap \varphi^\perp)$.
	Therefore $\bar{\sigma}_0-\bar{\sigma}_0=\bar{N}_{0,\R}$, that is $Y_1$ contains an invariant point.
	
	Let $P_1$ be the (unique) invariant point of $Y_1$. Since $u$ is toric, $u(P_1)$ is 
	an invariant point of $Y$, hence $u(P_1)=P$.
\end{proof} 

In particular, the image $f'(F)$ contains $P$. We have constructed the data 
$$
f_1\colon (X'_1,B_{X'_1}+\bD'_{X'_1})\to Y_1\ni P_1,
$$ 
with $\dim X'_1=\dim X-1$, 
$-(K_{X'_1}+B_{X'_1}+\bD'_{X'_1})$ $\R$-free and $\mld_{{f_1}^{-1}P_1}(X'_1,B_{X'_1}+\bD'_{X'_1})\ge t$.
But $B_{X'_1}$ may not be effective!

Fix $0\le \lambda \le \frac{1}{w}$. Denote $\tilde{B}=(1-\lambda)\Sigma_X+\lambda B$.
Then 
$$
K+\tilde{B}+\lambda \bD_X =(1-\lambda)(K+\Sigma_X)+\lambda (K+B+\bD_X)=\lambda(K+B+\bD_X).
$$
Then $\mu^*(K+\tilde{B}+\lambda \bD_X)=K_{X'}+\tilde{B}_{X'}+\lambda \bD_{X'}$, where 
$\tilde{B}_{X'}=(1-\lambda)\Sigma_{X'}+\lambda B_{X'}$. By Lemma~\ref{bnda},
$$
\tilde{B}_{X'}=\sum_{e'\in \Delta'(1)}(1-\lambda a_{e'})V(e')\ge 0.
$$
The restriction of $K_{X'}+\tilde{B}_{X'}+\lambda \bD_{X'}$ to $\phi^{-1}(T)$ identifies with
$(K_{X'_1}+\tilde{B}_{X'_1}+\lambda \bD'_{X'_1})\boxtimes K_T$, where 
$\tilde{B}_{X'_1}=(1-\lambda)\Sigma_{X'_1}+\lambda B_{X'_1}$ is effective.

\begin{lem}\label{exthyp}
	Suppose $(X_1,\tilde{B}_{X'_1}+\gamma_1{f_1}^*H_1+\lambda \bD_{X'_1})$ has g-lc
	singularities, where $P_1\in H_1\subset Y_1$ is an invariant hyperplane section
	and $\gamma_1>0$. Then there exists an invariant hyperplane section 
	$P\in H\subset Y$ such that $u^*H=qH_1$ for some integer $1\le q<w$, and 
	$(X,B+\frac{\gamma_1}{\lambda w}f^*H+\bD_X)$ has g-lc singularities.
\end{lem}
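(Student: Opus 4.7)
The plan is to apply Proposition~\ref{extpofcn} to extend the lattice homomorphism $\varphi_0\colon N_0\to \Z$ coming from $H_1$ to a lattice homomorphism $\varphi'\colon N\to \Z$, then descend $\varphi'$ along $\pi$ to obtain the invariant hyperplane section $H\subset Y$, and finally translate the proposition's length estimate on $\varphi'(U)$ into the claimed g-lc bound via polar duality in $\square$.

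First I would translate the hypothesis into combinatorial data. The section $H_1$ corresponds to a primitive $\bar{\varphi}_1\in \bar{M}_0\cap \bar{\sigma}_0^\vee$, whose pullback along $\pi_0\colon N_0\to \bar{N}_0$ is a surjective $\varphi_0\colon N_0\to \Z$ with $\varphi_0\in |\Delta_0|^\vee$, and $f_1^*H_1=(\chi^{\varphi_0})_{X'_1}$. A direct computation using the identity $-K-\tilde{B}_{X'_1}-\lambda \bD'_{X'_1}=-\lambda(K_{X'_1}+B_{X'_1}+\bD'_{X'_1})$ shows that the $\square$ attached to $(X'_1,\tilde{B}_{X'_1}+\gamma_1 f_1^*H_1+\lambda \bD'_{X'_1})$ equals $\gamma_1\varphi_0+\lambda \square_0$, where $\square_0$ is the $\square$ of $(X'_1,B_{X'_1}+\bD'_{X'_1})$. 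The assumed g-lc becomes, via Proposition~\ref{lccr}, the relation $-\frac{\gamma_1}{\lambda}\varphi_0\in \square_0$, equivalent by polar duality against $U_0=U\cap \varphi^\perp=\square_0^*$ to $\varphi_0(U_0)=[0,l_0]$ for some $0<l_0\le \lambda/\gamma_1$.

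Next I would feed these into Proposition~\ref{extpofcn} with $\Lambda=N$, $\sigma=|\Delta|$, $C=U$, and $\varphi$ the homomorphism of the construction, so that $\varphi(U)=[-a,b]$ has positive length $w=a+b$ containing $0$ in its interior by Lemma~\ref{1w}; the origin lies in $U$, and each extremal ray of $|\Delta|$ is generated by some $e_i\in \Delta(1)\cap U$ (using $a_{e_i}\le 1$ since $B\ge 0$), so the hypotheses are met. The proposition yields $\varphi'\colon N\to \Z$ with $\varphi'|_{N_0}=q\varphi_0$ for some integer $1\le q<w$, and $\varphi'(U)=[0,l']$ with $l'\le w l_0\le w\lambda/\gamma_1$. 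Because $\varphi'(U)\subset \R_{\ge 0}$ and $|\Delta|=\cup_{t>0}tU$, $\varphi'\ge 0$ on $|\Delta|$; then any $v\in \ker\pi=N\cap(|\Delta|\cap-|\Delta|)$ satisfies $\varphi'(v)=0$, so $\varphi'=\pi^*(\bar{\varphi}')$ for some $\bar{\varphi}'\in \bar{M}\cap \bar{\sigma}^\vee$. Writing $\bar{\varphi}'=k\bar{\varphi}''$ with $\bar{\varphi}''$ primitive and $k\ge 1$, set $H=(\chi^{\bar{\varphi}''})_Y$; the equality $\bar{\varphi}''|_{\bar{N}_0}=(q/k)\bar{\varphi}_1$ forces $k\mid q$ and gives $u^*H=q'H_1$ with $q':=q/k$ a positive integer at most $q<w$.

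Finally, polar duality of $\varphi'(U)\subset [0,l']$ yields $-\frac{1}{l'}\varphi'\in \square$ (the $\square$ of $(X,B+\bD_X)$); since $l'\le w\lambda/\gamma_1$, this upgrades to $-\frac{\gamma_1}{w\lambda}\varphi'\in \square$. Because $(\chi^{\varphi'})_X=kf^*H$, Lemma~\ref{sqc1} shows that $(X,B+\frac{k\gamma_1}{w\lambda}f^*H+\bD_X)$ is g-lc, and convexity of $\square$ (which contains the origin as the original pair is g-lc) lets us shrink the coefficient from $\frac{k\gamma_1}{w\lambda}$ to $\frac{\gamma_1}{w\lambda}$, completing the proof. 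I expect the main obstacle to be the bookkeeping in the first step: correctly tracking how the scaling by $\lambda$ (going from $B$ to $\tilde{B}$) and the addition of $\gamma_1 f_1^*H_1$ deform $\square_0$, so that the length bound $l'\le wl_0$ delivered by Proposition~\ref{extpofcn} produces precisely the claimed coefficient $\frac{\gamma_1}{\lambda w}$.
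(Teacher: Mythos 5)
Your proof is correct and follows essentially the same route as the paper: translate the g-lc hypothesis into the membership $-\frac{\gamma_1}{\lambda}\varphi_0\in\square_0$, dualize to a length bound $l_0\le\lambda/\gamma_1$ for $\varphi_0$ on $U_0$, invoke Proposition~\ref{extpofcn} to extend $\varphi_0$ to $\varphi'\colon N\to\Z$, descend $\varphi'$ to $\bar M$ via $\pi$, and dualize the resulting length bound back into the coefficient $\frac{\gamma_1}{\lambda w}$.

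One small point worth noting is the choice of the set $C$ when applying Proposition~\ref{extpofcn}. You take $C=U$, so the proposition's ``$w$'' is the length of $\varphi(U)$, which is exactly the $w$ appearing in the statement of the lemma; this yields $q<w$ and $l'\le w l_0\le \lambda w/\gamma_1$, matching the claimed coefficient directly. The paper instead works with $C=\tilde U=\lambda^{-1}U$ (which also satisfies the hypotheses, since $\Delta'(1)\subset\tilde U$ by effectivity of $\tilde B_{X'}$), but then the proposition's length is $\lambda^{-1}w$, so the bound on $q$ it literally delivers is $q<\lambda^{-1}w$ and the resulting coefficient is $\gamma_1/w$ rather than $\gamma_1/(\lambda w)$; the displayed inequalities in the paper's proof appear to suffer a small slippage between these two normalizations. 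Your choice $C=U$, verified by $\Delta(1)\subset U$ (from $B\ge 0$, hence $a_e\le 1$ for $e\in\Delta(1)$), avoids that issue and is the cleaner option. Your extra step of extracting a primitive $\bar\varphi''$ from $\bar\varphi'$ is not in the paper (which just takes $H=(\chi^{\bar\varphi})_Y$ for the not-necessarily-primitive $\bar\varphi$), and is harmless: it only matters if one insists the hyperplane section be irreducible, and your divisibility argument $k\mid q$ correctly preserves the bound $1\le q'<w$.
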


\begin{proof}
	There exists $\bar{\varphi}_0 \in \bar{M}_0 \cap \bar{\sigma}_0^\vee\setminus 0$ with 
	$H_1=(\chi^{\bar{\varphi}_1})_{Y_1}$. Consider the commutative diagram
	of lattice homomorphisms
	\[ \xymatrix{
		N_0  \ar[d]_{\pi_0}   \ar[r]^j   &   N  \ar[d]^\pi  \\
		\bar{N}_0 \ar[r]^u   & \bar{N}
	} \]
	Let $\varphi_0=\pi_0^*(\bar{\varphi}_0)\in M_0\cap |\Delta_0|^\vee\setminus 0$.
	Denote $\tilde{U}=U(X',\tilde{B}_{X'}+\lambda \bD_{X'})$. Then 
	$\tilde{U}=\lambda^{-1}U$ and $\tilde{U}_0=\lambda^{-1}U_0$.
	By assumption, $\varphi_0(\tilde{U}_0)\subset [0,\gamma_1^{-1}]$.
	Note that the cone $\cup_{t>0} t\tilde{U}$ equals $|\Delta|=|\Delta'|$, which is 
	generated by $\Delta'(1)$. Since $\tilde{B}_{X'}$ is effective, 
	$\Delta'(1)\subset \tilde{U}$. Therefore Proposition~\ref{extpofcn}
	applies: there exists $\varphi\in M\setminus 0$ such that $j^*(\varphi)=q\varphi_0$
	for some integer $1\le q <w$, and $\varphi(\tilde{U})\subset [0,w\gamma_1^{-1}]$.
	The latter is equivalent to
	$$
	\varphi(U)\subset [0,\lambda w\gamma_1^{-1}].
	$$
	Since $\varphi\in |\Delta|^\vee$ and $\pi$ is surjective, there exists 
	$\bar{\varphi}\in \bar{M}\cap\bar{\sigma}^\vee\setminus 0$ such that $\varphi=\pi^*(\bar{\varphi})$.
	Since $\pi_0$ is surjective, a diagram chase gives 
	$$
	q\bar{\varphi}_0=u^*(\bar{\varphi}).
	$$
	Therefore $H=(\chi^{\bar{\varphi}} )_Y$ satisfies the claim.
\end{proof}

\begin{rem}
	Let $n=\lceil w\rceil$. Then $\{-\varphi,\varphi\} \subset  M\cap n\square\subset M\cap \square_{-nK-nB-n\bD_X}$.
	Let $\Lambda_n \subset |-n(K+B+\bD_X)|$ be the induced invariant pencil. The toric rational map
	$$
	\phi_{\Lambda_n}\colon  X\dashrightarrow \bP^1, \ x \mapsto [\chi^{-\varphi}(x): \chi^{\varphi}(x)]
	$$
	has Stein decomposition $u\circ \phi$, where $\phi$ is our rational fibration and 
	$u\colon \bP^1\to \bP^1$ is the double cover $[z_0:z_1]\mapsto [z_0^2:z_1^2]$.
	Let $D_n$ be a general member of 
	$\Lambda_n$. Then $n(K_X+B+\bD_X+\frac{1}{n}D_n)\sim 0$, and for a toric valuation 
	$E_e$ of $X$ we compute its g-log discrepancy 
	$$
	a_{E_e}(X,B+\bD_X+\frac{1}{n}D_n)=-\min(\frac{\langle -\varphi,e\rangle}{n},\frac{\langle \varphi,e\rangle}{n} )=
	\frac{|\langle \varphi,e\rangle|}{n}\ge 0.
	$$
	Therefore $(X,B+\bD_X+\frac{1}{n}D_n)$ has g-lc singularities, and its toric lc places are 
	$$
	\{E_e | e\in N^{prim}\cap |\Delta|\cap \varphi^\perp \},
	$$
	that is the toric valuations of $X$ which are horizontal with respect to $\phi_{\Lambda_n}$.
	
\end{rem}


\section{Bounded hyperplanes}


Let $f\colon (X,B+\bD_X)\to Y\ni P$ and $a>0$ be a data as in Theorem~\ref{mn}.
Let $X=T_N\emb(\Delta)$, let $\sigma_0\subset N_\R$ be the cone of g-lc places.
Denote $l=\dim N-\dim \sigma_0$. Since
$$ 
l\le \dim X,
$$ 
Theorem~\ref{mn} follows from a stronger statement:

\begin{prop}
	There exists an invariant hyperplane section $P\in H\subset Y$ such that 
	$$(X,B+\gamma(l,a)f^*H+\bD_X)$$ has g-lc singularities.
\end{prop}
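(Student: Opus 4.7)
The plan is to induct on $l$. At each step I will apply Lemma~\ref{ft}(b) to produce a surjective $\varphi\colon N\to\Z$ whose image $\varphi(U)\subset\R$ has length $w\le l^2/a$, and then either invoke Case~1 of Section~5 directly or recurse via the Case~2 fibration construction carried out in that section.

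For the base case $l=1$, Lemma~\ref{ft}(b) gives $w\le 1/a$. Here I first claim that $\varphi(U)$ must lie on one side of $0$, ruling out Case~2 of Section~5. Indeed, $\varphi$ vanishes on the recession cone $\sigma_0$ of $U$, so $\sigma_0\subset\varphi^\perp$; since $\dim\sigma_0=\dim N-1=\dim\varphi^\perp$, $\sigma_0$ is full-dimensional in $\varphi^\perp$. If $0$ were interior to $\varphi(U)$, there would exist $u_\pm\in U\subset|\Delta|$ with $\pm\varphi(u_\pm)>0$, and a direct convex-cone argument using decompositions $tw=tw_0+\alpha u_++\beta u_-$ with $\alpha,\beta\ge 0$ would force $\relint|\Delta_0|\subset\Int|\Delta|$; lattice points in the relative interior of the full-dimensional $\sigma_0$ would then give toric valuations with center in $f^{-1}P$ and g-log discrepancy $\le 0$, contradicting $\gmld_{f^{-1}P}=a>0$. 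Hence Case~1 of Section~5 applies and yields $P\in H\subset Y$ with $(X,B+w^{-1}f^*H+\bD_X)$ g-lc; monotonicity and $w^{-1}\ge a=\gamma(1,a)$ close the base case.

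For the inductive step $l\ge 2$, I split on whether $\varphi(U)$ straddles $0$. In the non-straddling subcase (Case~1 of Section~5), the direct hyperplane from that section gives g-lc coefficient $w^{-1}\ge a/l^2$, and an auxiliary induction on $l$ combined with the recursive definition $\gamma(l,a)=\gamma(l-1,a^2/l^2)$ shows that this dominates $\gamma(l,a)$. In the straddling subcase (Case~2), I follow the Section~5 construction with $\lambda:=w^{-1}$: this produces the toric contraction $f_1\colon(X'_1,\tilde B_{X'_1}+\lambda\bD'_{X'_1})\to Y_1\ni P_1$, whose boundary is effective by Lemma~\ref{bnda}, whose invariant equals $l_1=l-1$ (since $\sigma_0\subset\varphi^\perp$), and whose generalized minimal log discrepancy over $f_1^{-1}P_1$ is at least $\lambda a=a/w\ge a^2/l^2>0$; the essential point for the descent of the $\gmld$ bound is the inclusion $\relint|\Delta_0|\subset\Int|\Delta|$ just noted. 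Applying the induction hypothesis to this lower-dimensional datum gives $P_1\in H_1\subset Y_1$ with $(X'_1,\tilde B_{X'_1}+\gamma(l-1,a/w)f_1^*H_1+\lambda\bD'_{X'_1})$ g-lc, and Lemma~\ref{exthyp} (with $\gamma_1=\gamma(l-1,a/w)$) lifts this to $P\in H\subset Y$ with coefficient $\gamma_1/(\lambda w)=\gamma(l-1,a/w)$ on $f^*H$. Finally, monotonicity of $\gamma$ combined with $a/w\ge a^2/l^2$ gives $\gamma(l-1,a/w)\ge\gamma(l-1,a^2/l^2)=\gamma(l,a)$, completing the induction.

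The main obstacle I anticipate is reconciling the Case~1 direct bound with $\gamma(l,a)$ in the inductive step: the recursion $\gamma(l,a)=\gamma(l-1,a^2/l^2)$ is engineered precisely to match the Case~2 descent, so the Case~1 verification needs a separate induction comparing the polynomial growth of $\gamma(l,a)$ with the bound $w^{-1}\ge a/l^2$ from Lemma~\ref{ft}(b). Once this is in place, the remaining steps are bookkeeping: the Section~5 construction supplies $X'_1$ and $f_1$, Lemma~\ref{bnda} ensures effectivity of $\tilde B_{X'_1}$, and Lemma~\ref{exthyp} produces the required hyperplane on $Y$ from the one on $Y_1$.
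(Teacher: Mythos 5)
Your proposal is essentially the paper's own proof: the induction on $l$, the dichotomy in the inductive step between the non-straddling case (taking the hyperplane from Section~5 with coefficient $1/w$) and the straddling case (passing to $X'_1$ via the Section~5 construction with $\lambda=1/w$, applying Lemma~\ref{exthyp}, and closing with $\gamma(l-1,a/w)\ge\gamma(l-1,a^2/l^2)=\gamma(l,a)$) all match the paper exactly. The one place you deviate is the base case $l=1$: the paper takes $\varphi_1\in M^{prim}\cap\sigma_0^\perp$ directly and asserts (modulo sign) $aU'\subset[0,1]$, whereas you obtain $\varphi$ from Lemma~\ref{ft}(b) and explicitly rule out the straddling alternative by the convexity argument giving $\relint\sigma_0\subset\Int|\Delta|$ and hence an lc place centered in $f^{-1}P$, contradicting $a>0$. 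Since the $\varphi$ produced by Lemma~\ref{ft}(b) factors through $N/(N\cap(\sigma_0-\sigma_0))$, for $l=1$ it coincides with the paper's $\varphi_1$ up to sign, so this is really the same base case with one implicit point spelled out; that point is genuinely needed (otherwise $\chi^{\varphi_1}\notin\Gamma(\cO_X)$ and no hyperplane $H$ exists), so the extra care is welcome. One caveat you share with the paper: the inequality $a/l^2\ge\gamma(l,a)$ used to dispatch the non-straddling subcase is stated without proof, and as written it can fail when $a>1$ (e.g.\ $a=l=2$ gives $1/2\not\ge\gamma(2,2)=1$); your ``auxiliary induction'' would need either the reduction to $a\le 1$ or a sharper bound on $w$ than Lemma~\ref{ft}(b) supplies. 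This is an issue in the source, not a misreading on your part, but it is worth flagging.
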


\begin{proof} Note that the case $l=0$ is impossible, since $\gmld_{f^{-1}P}(X,B+\bD_X)>0$.
	
	Suppose $l=1$. This means that $\sigma_0^\perp$ is a one dimensional
	vector subspace of $M_\R$. Let $\varphi_1\in M^{prim}\cap \sigma_0^\perp$.
	It defines a surjective homomorphism $\varphi_1\colon N\to \Z$.
	Since $U$ is the Minkowski sum of a polytope and $\sigma_0$, the $\varphi_1$-image
	 $U'\subset \R$ of $U\subset N_\R$ is a compact interval. Moreover, our
	 assumption $N\cap \Int(aU)=\emptyset$ is equivalent to $\Z\cap \Int(aU')=\emptyset$.
	 Since $aU'$ contains the origin, modulo replacing $\varphi_1$ by $-\varphi_1$, we may 
	 suppose $aU'\subset [0,1]$. Equivalently, $(X,B+a(\chi^{\varphi_1})_X+\bD_X)$ has 
	 g-lc singularities.
	 Since $f$ is a contraction and $\chi^{\varphi_1}\in \Gamma(\cO_X)$, 
	 there exists $\bar{\varphi}\in \bar{M}\cap \bar{\sigma}^\vee\setminus 0$ such that  
	$\varphi_1=\pi^*\bar{\varphi}$. Then $H=(\chi^{\bar{\varphi}})_Y$ satisfies the claim.
	
	Suppose $l \ge 2$. By assumption, $N\cap \Int(aU)=\emptyset$.
	By Lemma~\ref{ft}, there exists a surjective lattice homomorphism
	$\varphi\colon N\to \Z$ such that the interval $\varphi(U)$ has length $0<w\le \frac{l^2}{a}$.
	We apply the construction of the previous section to the data
	$f\colon (X,B+\bD_X)\to Y\ni P$, $t=a$ and $\varphi$. There are two possibilities:
	
	Case $0\in \partial \varphi(U)$. 
	Modulo replacing $\varphi$ by $-\varphi$, we may
	suppose $\varphi=[0,w]$. Then there exists an invariant hyperplane section 
	$P\in H\subset Y$ such that $(X,B+\frac{1}{w}f^*H+\bD_X)$ has g-lc singularities.
	We have 
	$$
	\frac{1}{w}\ge \frac{t}{l^2}\ge \gamma(l,t).
	$$
	
	Case $0\in \Int \varphi(U)$. Construct the rational map $\phi\colon X\dashrightarrow \bP^1$,
	resolve it to $\phi'\colon X'\to \bP^1$, and take $X'_1\isoto {\phi'}^{-1}(1)$. Construct
	the toric fibration $f_1\colon X'_1\to Y_1\ni P_1$. Choose $\lambda=\frac{1}{w}$.
	After log pullback, adjunction and scaling with the toric boundary, we obtain that 
	$\tilde{B}_{X'_1}$ is effective, 
	$-(K_{X'_1}+\tilde{B}_{X'_1}+\lambda\bD'_{X'_1})$ is $f_1$-nef and
	$\gmld_{f_1^{-1}P_1}(X'_1,\tilde{B}_{X'_1}+\lambda\bD'_{X'_1})\ge \lambda t=\frac{t}{w}$.
	
	Denote $l'=l(X'_1,\tilde{B}_{X'_1}+\lambda\bD'_{X'_1})$. Then $l'=l-1$, since 
	$\dim X'_1=\dim X-1$ and $\sigma_0\subset \varphi^\perp$ is the cone of g-lc places of 
	$(X'_1,\tilde{B}_{X'_1}+\lambda\bD'_{X'_1})$ as well. By induction on $l$, there exists 
	an invariant hyperplane $P_1\in H_1\subset Y_1$ such that 
	$(X'_1,\tilde{B}_{X'_1}+\gamma_1{f_1}^*H_1+\lambda  \bD_{X'_1})$ has g-lc
	singularities, with $\gamma_1=\gamma(l-1,\frac{t}{w})$.
	By Lemma~\ref{exthyp}, there exists 
	an invariant hyperplane $P\in H\subset Y$ such that 
	$(X,B+\gamma_1 f^*H+\bD_X)$ has g-lc singularities. We conclude from
	$$
	\gamma(l-1,\frac{t}{w})\ge \gamma(l-1,\frac{t^2}{l^2})=\gamma(l,t).
	$$
\end{proof}


\end{document}